\numberwithin{equation}{section}
\newtheorem{thm}[equation]{Theorem}
\newtheorem{lem}[equation]{Lemma}
\newtheorem{prp}[equation]{Proposition}
\theoremstyle{definition}
\newtheorem{df}[equation]{Definition}
\theoremstyle{remark}
\newtheorem*{rem}{Remark}
\DeclareMathOperator{\map}{map}
\DeclareMathOperator{\hocolim}{hocolim}
\DeclareMathOperator{\lk}{lk}
\DeclareMathOperator{\Ob}{Ob}
\def\Sp{\mathbf{Top}}
\def\R{\mathbb{R}}
\def\Z{\mathbb{Z}}
\def\N{\mathbb{N}}
\def\P{\mathsf{P}}
\def\V{\mathsf{V}}
\title{On execution spaces of PV-programs}
\author{Krzysztof Ziemia\'nski}
\thanks{}
\begin{document}

\begin{abstract}
	Semaphores were introduced by Dijkstra \cite{D} as a tool for modeling concurrency in computer programs. In this paper we provide a formal definition of PV-programs, i.e.\ programs using semaphores, their state spaces and execution spaces. The main goal of this paper is to prove that every finite homotopy type may appear as a connected component of  the execution space of a PV-program.
\end{abstract}

\maketitle

\section{Introduction}
In 1968 Dijkstra \cite{D} introduced semaphores --- a tool which can be used to synchronize processes in concurrent and distributed systems. For a concurrent program using semaphores (referred further as \emph{a PV-program}) one can assign a space of states in which this program can be during its execution. Such state space carries a structure of d-space \cite{Gr} which determines how its states can change in time. The space of directed paths between the point representing the start of the execution to the final point represents possible executions of the program and will be called the execution space. State spaces of PV-programs are simple examples of Higher Dimensional Automata introduced by Pratt \cite{P}.

The problem of describing the homotopy type of the execution space of a given PV-program was studied intensively in recent years, and many constructions which allow explicit calculations has been presented, see for example \cite{B}, \cite{R1}, \cite{R2}, \cite{R3}, \cite{R4}. It seemed that only particular class of homotopy types can be obtained as execution spaces of PV-programs, for example no examples of execution spaces of PV-programs having torsion in homology was known. In this paper we prove that this conjecture is false; in fact, any finite simplicial complex can be realized as the connected component of the execution space of a PV-program.
 
\subsection*{Organization of the paper}
In Section 2 we provide a strict definition of PV-program, its state space and the execution space. In Section 3 we introduce a notion of execution equivalence --- a relation between PV-programs which preserves the homotopy type of execution spaces. Then, in Section 4,  we define Euclidean complexes and discuss their relationship with state spaces of PV-programs; as a main result we show that every complement-bounded Euclidean complex is a state space of a PV-program. In Section 5 we construct, for any finite simplicial complex $K$, a Euclidean complex having directed path space homotopy equivalent to $|K|$. The  tool we use is the presentation of a directed path space of a Euclidean as a homotopy colimit of smaller spaces presented in \cite{RZ}. In Section 6 we provide an explicit construction of a PV-program having $|K|$ as a connected component of the execution space. Finally, in Section 7 we introduce $PV(n)$-spaces --- topological spaces which can be realized as execution spaces of PV-programs using resources of limited capacity and formulate some open questions.

\subsection*{Notation}
By $\N$ we denote the semiring of non-negative integers. Points of $\R^n$ will be denoted by bold letters, while its coordinates by regular ones with suitable indices; for example $\mathbf{a}=(a_1,\dots,a_n)$. Furthermore, we will write $\mathbf{0}$ for $(0,\dots,0)$ (similarly $\mathbf{1}$, $\mathbf{2}$, \dots). Two kinds of comparators between points of $\R^n$ will be used, namely
\begin{align*}
	\mathbf{a}\leq\mathbf{b}&\Leftrightarrow \forall_{i=1}^n\; a_i\leq b_i\\
	\mathbf{a}<\mathbf{b}&\Leftrightarrow \forall_{i=1}^n\; a_i< b_i.
\end{align*}
We will write $|\mathbf{x}|=\sum_{i=1}^n x_i$ for $\mathbf{x}\in\R^n$. Similarly to one-dimensional case denote $[\mathbf{a},\mathbf{b}]:=\{\mathbf{t}:\; \mathbf{a}\leq\mathbf{t}\leq \mathbf{b}\}$, $(\mathbf{a},\mathbf{b})=\{\mathbf{x}\in\R^n:\; \mathbf{a}<\mathbf{x}<\mathbf{b}\}$ for $\mathbf{a},\mathbf{b}\in\R^n$. Finally, we denote $\lfloor \mathbf{x} \rfloor:=(\lfloor x_1\rfloor, \dots, \lfloor x_n\rfloor)$,  $\lceil \mathbf{x} \rceil:=(\lceil x_1\rceil, \dots, \lceil x_n \rceil)$. If $X$ is a set, then $\chi_A:X\to \{0,1\}$ is a characteristic function of a subset $A\subseteq X$ .

\subsection*{d-spaces}
\emph{A d-space} \cite{Gr} is a pair $(X,\vec{P}(X))$, where $X$ is a topological space, and $\vec{P}(X)\subseteq P(X):=\map([0,1],X)$ is a family of paths on $X$ (called \emph{d-paths}) such that
\begin{itemize}
	\item{all constant paths are d-paths,}
	\item{concatenations of d-paths and d-paths,}
	\item{non-decreasing reparametrizations of d-paths and d-paths.}
\end{itemize}

For $x,y\in X$ denote 
\begin{equation}
	\vec{P}(X)_x^y=\{\alpha\in \vec{P}(X):\; \alpha(0)=x\; \wedge \; \alpha(1)=y\}.
\end{equation}
Important examples of d-spaces are \emph{the directed interval} $\vec{I}=[0,1]$, where $\vec{P}(\vec{I})$ is the family of non-decreasing paths, and \emph{the directed Euclidean space} $\vec{\R}^n$, where
\begin{equation}
	\vec{P}({\vec{\R}^n})=\{\mathbf{f}=(f_1,\dots,f_n)\in P(\R^n):\; \forall_i\; \text{$f_i$ is non-decreasing}\}.
\end{equation}
If $(X,\vec{P}(X))$ is a d-space then every subspace $Y\subseteq X$ carries the restricted structure of a d-space given by $\vec{P}(Y)=\vec{P}(X)\cap P(Y)$. All d-spaces appearing in this paper will be subsets of $\vec{\R}^n$ with the restricted d-structure.

\section{PV-programs}
This Section contains definitions of PV-programs, its state spaces and execution spaces. The original concept of Dijkstra \cite{D} is the following: a PV-program is a family of processes sharing common resources. Every resource has a capacity which determines a number of processes which can simultaneously acquire it. Processes perform simultaneously sequences of operations; every operation is either an acquisition of some resource, or a release of a resource. An acquisition of a resource $r$, denoted by $\P{r}$ causes a process to stop until the resource $r$ becomes available (i.e.\ the number of processes which acquired it becomes less than its capacity); then the process acquires it and continues its execution. A release $\V{r}$ frees a resource $r$ and makes it available for other processes. Processes using semaphores are widely used in practical implementations and its executions were studied from theoretical point of view \cite{FGHR}.

In this paper we consider a slightly general notion of PV-program. Namely, we allow a single operation to acquire and release any number of resources. The execution of such an operation proceeds as follows: first the process releases resources; then it awaits until the resources it needs to acquire became available, and finally acquires them. We will refer to PV-programs using only elementary operations as elementary PV-programs. It turns out (cf.\ \ref{p:Reductions}), \ref{p:ExEq}) that passing from elementary PV-programs to general ones does not produce any new homotopy types of execution spaces.

\begin{df}
	\emph{A resource set} is a finite set $R$ equipped with a capacity function $\mu:R\rightarrow\N\setminus\{0\}$. Elements of a resource set will be called \emph{resources}.
\end{df}
For the remainder of this section let $R$ be a fixed resource set. 

\begin{df}
	\emph{A PV-operation $q$} is a pair of functions $q_{\P},q_{\V}:R\rightarrow \N$. We will also use a notation $q=\V X \P Y$, $X,Y\subseteq R$ whenever
\[
	q_{\V}(r)=\begin{cases}
		1 & \text{for $r\in X$}\\
		0 & \text{for $r\not\in X$}
	\end{cases}
	\qquad
	q_{\P}(r)=\begin{cases}
		1 & \text{for $r\in Y$}\\
		0 & \text{for $r\not\in Y$.}
	\end{cases}
\]	
A PV-operation $\V\emptyset \P\{r\}$ is called \emph{an elementary acquisition} of a resource $r\in R$ and denoted by $\P{r}$. Similarly, $\V\{r\}\P\emptyset$ is called \emph{an elementary release} of $r$ and denoted by $\V{r}$. A PV-operation is elementary if it is either an elementary acquisition or an elementary release. A PV-operation $\emptyset:=\P\emptyset\V\emptyset$ will be called \emph{an empty PV-operation}. 
\end{df}
\begin{df}
	\emph{A PV-process} $Q$ is a sequence of PV-operations $(q^1,\dots,q^l)$. We say that $Q$ is \emph{elementary} if it contains only elementary PV-operations.
\end{df}

\begin{df}
	\emph{A PV-program} $\mathcal{Q}=\{Q_1,\dots,Q_n\}$ is a collection of PV-programs. We say that $\mathcal{Q}$ is elementary if it contains only elementary PV-processes.
\end{df}

For simplicity we will further skip the prefix "PV". In order to define state spaces we need to introduce a notion of progression which intuitively measures how advanced is a process at the moment it performs a given operation.

\begin{df}	
	\emph{A progression} of a process $Q=(q^1,\dots,q^l)$ is a sequence of real numbers $t^1<\dots<t^l$. A progression is \emph{integral} if all numbers $t^i$ are integers.	For a process with progression we will use notation
	\[
		Q=(q^1[t^1], q^2[t^2],\dots, q^l[t^l])=(q^i[t^i])_{i=1}^l.
	\]
	Every process is equipped with \emph{a canonical progression} defined by 
	\[
		Q=(q^1[0], q^2[1],\dots, q^l[l-1])=(q^i[i-1])_{i=1}^l.
	\]	
\end{df}

For the rest of the Section we assume that $Q=(q^i[t^i])_{i=1}^l$ is a process with progression, and $\mathcal{Q}=\{Q_j\}_{j=1}^n$, $Q_j=(q_j^i[t_j^i])_{i=1}^{l_j}$ is a program with progression (i.e.\ all its processes have progressions). Furthermore, we denote $\mathbf{t}^\bot=(t_1^1,\dots,t_n^1)$, $\mathbf{t}^\top=(t_1^{l_1},\dots,t_n^{l_n})$.

\begin{df}
	\emph{The potential function} $a^{Q}_r:\R\rightarrow \Z$ of a process $Q$ for a resource $r$ is defined by
	\[
		a^{Q}_r(t)=\sum_{i: t^i<t} q^i_{\P}(r)-\sum_{i: t^i\leq t} q^i_{\V}(r).
	\]
	\emph{The potential function} $a^{\mathcal{Q}}_r:\R^n\rightarrow \Z$ of a program $\mathcal{Q}$ for a resource $r$ is defined by
	\[
		a^{\mathcal{Q}}_r(\mathbf{t})=a^{\mathcal{Q}}_r(t_1,\dots,t_n):=\sum_{j=1}^n a^{Q_j}_r(t_j).
	\]
\end{df}

The potential function counts how many times a resource $r$ has been acquired by the process (or the processes of the program) when its advancement equals $t$ (resp.\ the advancement of $Q_j$ equals $t_j$). Note that resources are released just before an operation and they are acquired just after an operation. Potential functions are lower semi-continuous and constant on open intervals $(-\infty, t^1)$, $(t^i,t^{i+1})$ for $i=1,\dots,l-1$ and $(t^l,+\infty)$ (in the single process case) or on open hyperrectangles having the form
\[
	\{\mathbf{x}\in\R^n:\; (t_1^{k_1},\dots,t_n^{k_n})<\mathbf{x}<(t_1^{k_1+1},\dots,t_n^{k_n+1})\}
\]
in the case of a program.
	
\begin{df}
	A process $Q$ is \emph{valid} if for every resource $r$
	\begin{itemize}	
	 \item{$\lim_{t\rightarrow+\infty}a^Q_r(t)=0$ (resources are eventually released), and}
	 \item{$a^Q_r(t)\geq 0$ for all $t\in \R$ (resources are acquired before they are released).}
	\end{itemize}
	$Q$ is \emph{elementary valid} if{}f it is elementary, valid, and $a^Q_r(t)\in\{0,1\}$ (which means that no resource is acquired twice at any moment). We say that a program is \emph{valid} (resp.\ \emph{elementary valid}) if all its processes are valid (resp. elementary valid).
\end{df}

Note that if $Q$ is valid, then $a_r^Q(x)=0$ whenever $x\leq t^1$ or $t^l\leq x$.

\begin{df}
	\emph{The state space} of a program $\mathcal{Q}$ is a d-space
	\[
		S(\mathcal{Q}):=\{\mathbf{t}\in \vec{\R}^n:\; \forall_{r\in R}\;\; a^{\mathcal{Q}}_r(\mathbf{t})\leq \mu(r)\}.
	\]
	The d-structure on $S(\mathcal{Q})$ is inherited from $\vec{\R}^n$, i.e.\ d-paths are exactly paths having non-decreasing coordinates.
\end{df}
By lower semi-continuity of potential functions, $S(\mathcal{Q})$ is a closed subset of $\R^n$.

\begin{df}
	\emph{The execution space} of a program $\mathcal{Q}$ with the initial point $\mathbf{a}<\mathbf{t}^\bot$ and the final point $\mathbf{b}>\mathbf{t}^\top$ is the space
	\[
		E(\mathcal{Q},\mathbf{a},\mathbf{b}):=\vec{P}(S(\mathcal{Q}))_{\mathbf{a}}^{\mathbf{b}}.
	\]
\end{df}

\begin{prp}
	The state space and the execution space of a program does not depend, up to homeomorphism, on the choice of progressions and on the choice of an initial and a final point.
\end{prp}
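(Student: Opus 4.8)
The plan is to realize every change of progression together with every change of endpoints by a single ambient self-homeomorphism of $\vec{\R}^n$ built coordinatewise from monotone maps, and then to transport directed paths along it.

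First I would fix the two data sets: the progression $(t_j^i)$ with chosen points $\mathbf{a}<\mathbf{t}^\bot$, $\mathbf{b}>\mathbf{t}^\top$, and a second progression $(s_j^i)$ with points $\mathbf{a}'<\mathbf{s}^\bot$, $\mathbf{b}'>\mathbf{s}^\top$; write $S,E$ and $S',E'$ for the corresponding state and execution spaces and $a_r,a'_r$ for the two program potential functions. For each process index $j$ the finite increasing sequences
\[
 a_j<t_j^1<\dots<t_j^{l_j}<b_j, \qquad a_j'<s_j^1<\dots<s_j^{l_j}<b_j'
\]
have equal length, so there is an increasing homeomorphism $\phi_j\colon\R\to\R$ (for instance piecewise-linear) with $\phi_j(t_j^i)=s_j^i$ for all $i$, $\phi_j(a_j)=a_j'$ and $\phi_j(b_j)=b_j'$. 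Set $\phi=(\phi_1,\dots,\phi_n)\colon\R^n\to\R^n$; it is a homeomorphism preserving each coordinate's order.

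The one substantive step is the invariance of potential functions under such order-preserving reparametrizations. Since $\phi_j$ is increasing with $\phi_j(t_j^i)=s_j^i$, for every $x_j$ we have $s_j^i<\phi_j(x_j)\Leftrightarrow t_j^i<x_j$ and likewise with $\leq$; substituting into the defining sums gives $a'_r(\phi(\mathbf{x}))=a_r(\mathbf{x})$ for every resource $r$ and every $\mathbf{x}\in\R^n$ (note that validity is never used here). Consequently $\phi(\mathbf{x})$ satisfies all inequalities $a'_r\leq\mu(r)$ precisely when $\mathbf{x}$ satisfies $a_r\leq\mu(r)$, so $\phi$ restricts to a homeomorphism $S\xrightarrow{\cong}S'$. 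As each $\phi_j$ is increasing, both $\phi$ and $\phi^{-1}$ carry coordinatewise non-decreasing paths to coordinatewise non-decreasing paths, so $\phi$ is even an isomorphism of d-spaces. Choosing the same endpoints on both sides (equivalently, ignoring them) already gives the claim for the state space.

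For the execution space I would push paths forward: the assignment $\gamma\mapsto\phi\circ\gamma$ is continuous for the compact-open topology, sends d-paths to d-paths by the monotonicity above, and maps a path from $\mathbf{a}$ to $\mathbf{b}$ to one from $\phi(\mathbf{a})=\mathbf{a}'$ to $\phi(\mathbf{b})=\mathbf{b}'$. Its inverse is post-composition with $\phi^{-1}$, again continuous, so it is a homeomorphism
\[
 E=\vec{P}(S)_{\mathbf{a}}^{\mathbf{b}}\xrightarrow{\cong}\vec{P}(S')_{\mathbf{a}'}^{\mathbf{b}'}=E'.
\]
Because the two progressions and the two endpoint choices were arbitrary subject only to the defining inequalities, this establishes independence up to homeomorphism in both parameters at once. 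I expect the only real obstacle to be the bookkeeping in the potential-function identity $a'_r\circ\phi=a_r$; once that is in place the remaining assertions are the formal facts that an order-preserving coordinatewise homeomorphism of $\R^n$ induces a homeomorphism of the state spaces and, by post-composition, of their directed path spaces.
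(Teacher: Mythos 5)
Your proposal is correct and follows essentially the same route as the paper's own proof: a coordinatewise increasing homeomorphism $\Phi=\prod_j\varphi_j$ matching the two progressions and the two endpoint choices, the identity $a_r^{\bar{\mathcal{Q}}}\circ\Phi=a_r^{\mathcal{Q}}$, and the induced d-homeomorphism of state spaces followed by post-composition on directed path spaces. No gaps.
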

\begin{proof}
	Fix a program with progression $\mathcal{Q}=\{Q_j\}_{j=1}^n$, $Q_j=(q_j^i[t_j^i])_{i=1}^{l_j}$ and let $\bar{\mathcal{Q}}$ denote the same program with another progression, namely $\bar{\mathcal{Q}}=\{\bar{Q}_j\}_{j=1}^n$, $\bar{Q}_j=(q_j^i[\bar{t}_j^i])_{i=1}^{l_j}$. Let $\mathbf{a}<\mathbf{t}^\bot$, $\mathbf{b}>\mathbf{t}^\top$, $\bar{\mathbf{a}}<\bar{\mathbf{t}}^\bot=(\bar{t}_1^1,\dots,\bar{t}_n^1)$ and $\bar{\mathbf{b}}>\bar{\mathbf{t}}^\top=(\bar{t}_1^{l_1},\dots,\bar{t}_n^{l_n})$.
	 For every $j=1,\dots,n$ choose an increasing homeomorphism $\varphi_j:\R\to\R$ such that $\varphi_j(t_j^i)=\bar{t}_j^i$ for $i=1,\dots,l_j$, $\varphi_j(a_j)=\bar{a}_j$ and $\varphi_j(b_j)=\bar{b}_j$.	 
	  Clearly $a_r^{Q_j}(t)=a_r^{\bar{Q}_j}(\varphi_j(t))$ for every resource $r\in R$ and then $a_r^{\mathcal{Q}}(\mathbf{t})=a_r^{\bar{\mathcal{Q}}}(\Phi(\mathbf{t}))$ for $\mathbf{t}\in\R^n$, where $\Phi=\prod_j \varphi_j:\R^n\to\R^n$. As a consequence, $\Phi|_{S(\mathcal{Q})}:S(\mathcal{Q})\to S(\bar{\mathcal{Q}})$ is a d-homeomorphism and then it induces a homeomorphism 
	 \[
	  	E(\mathcal{Q},\mathbf{a},\mathbf{b})=\vec{P}(S(\mathcal{Q}))_{\mathbf{a}}^{\mathbf{b}}\to\vec{P}(S(\bar{\mathcal{Q}}))_{\bar{\mathbf{a}}}^{\bar{\mathbf{b}}}= E(\bar{\mathcal{Q}},\bar{\mathbf{a}},\bar{\mathbf{b}}).\qedhere
	  \]
\end{proof}

We will further omit initial and final points and write $E(\mathcal{Q})$ instead of $E(\mathcal{Q},\mathbf{a},\mathbf{b})$.

\begin{prp}\label{p:ExVal}
	If $\mathcal{Q}$ is valid, then $E(\mathcal{Q})$ is homotopy equivalent to $\vec{P}(S(\mathcal{Q}))_{\mathbf{t}^\bot}^{\mathbf{t}^\top}$.
\end{prp}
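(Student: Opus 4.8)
The plan is to exhibit two mutually homotopy–inverse maps between $E(\mathcal{Q})=\vec{P}(S(\mathcal{Q}))_{\mathbf{a}}^{\mathbf{b}}$ and $\vec{P}(S(\mathcal{Q}))_{\mathbf{t}^\bot}^{\mathbf{t}^\top}$. Write $S=S(\mathcal{Q})$, let $B=[\mathbf{t}^\bot,\mathbf{t}^\top]$, and denote concatenation by $*$ and the constant path at a point $\mathbf{x}$ by $c_{\mathbf{x}}$. The first map $\Psi\colon \vec{P}(S)_{\mathbf{t}^\bot}^{\mathbf{t}^\top}\to E(\mathcal{Q})$ prepends the linear segment $\ell_1$ from $\mathbf{a}$ to $\mathbf{t}^\bot$ and appends the linear segment $\ell_2$ from $\mathbf{t}^\top$ to $\mathbf{b}$; both segments have non-decreasing coordinates (as $\mathbf{a}<\mathbf{t}^\bot$ and $\mathbf{t}^\top<\mathbf{b}$) and lie in $\{\mathbf{t}\le \mathbf{t}^\bot\}$ resp.\ $\{\mathbf{t}\ge \mathbf{t}^\top\}$, so $\Psi$ is well defined. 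The second map is $\Phi(\gamma)=\rho\circ\gamma$, where $\rho\colon\vec{\R}^n\to\vec{\R}^n$ is the coordinatewise clamping $\rho(\mathbf{t})=\max(\mathbf{t}^\bot,\min(\mathbf{t},\mathbf{t}^\top))$ onto $B$: it is monotone, hence sends d-paths to d-paths, and satisfies $\rho(\mathbf{a})=\mathbf{t}^\bot$, $\rho(\mathbf{b})=\mathbf{t}^\top$, so $\Phi(\gamma)\in\vec{P}(S)_{\mathbf{t}^\bot}^{\mathbf{t}^\top}$ once we know $\rho$ preserves $S$.

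The crucial step, and the place where validity enters, is to verify that $\rho$ — and the whole family of clampings used below — maps $S$ into itself. By the remark following the definition of validity, $a^{Q_j}_r(x)=0$ for $x\le t_j^1$ and for $x\ge t_j^{l_j}$. Clamping a coordinate $t_j$ up to a value $\le t_j^1$ or down to a value $\ge t_j^{l_j}$ therefore moves $t_j$ only within a range on which $a^{Q_j}_r$ vanishes, leaving every potential $a^{\mathcal{Q}}_r(\mathbf{t})=\sum_j a^{Q_j}_r(t_j)$ unchanged; in particular the defining inequalities $a^{\mathcal{Q}}_r\le\mu(r)$ are preserved. I expect this bookkeeping — pinning down exactly which coordinates are clamped and checking they fall in the zero-potential range — to be essentially the only real obstacle; the rest is formal.

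To compare the composites I would introduce, for $s\in[0,1]$, the points $\mathbf{a}_s=(1-s)\mathbf{t}^\bot+s\mathbf{a}$ and $\mathbf{b}_s=(1-s)\mathbf{t}^\top+s\mathbf{b}$, which satisfy $\mathbf{a}\le\mathbf{a}_s\le\mathbf{t}^\bot$ and $\mathbf{t}^\top\le\mathbf{b}_s\le\mathbf{b}$, together with the clampings $\rho_s(\mathbf{t})=\max(\mathbf{a}_s,\min(\mathbf{t},\mathbf{b}_s))$ onto $[\mathbf{a}_s,\mathbf{b}_s]$. By the previous paragraph each $\rho_s$ preserves $S$, with $\rho_0=\rho$ and $\rho_1$ the clamping onto $[\mathbf{a},\mathbf{b}]$. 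For $\gamma\in E(\mathcal{Q})$ I would set
\[
	G_s(\gamma)=\ell^s_1 * (\rho_s\circ\gamma) * \ell^s_2,
\]
where $\ell^s_1$ is the segment from $\mathbf{a}$ to $\mathbf{a}_s$ and $\ell^s_2$ the segment from $\mathbf{b}_s$ to $\mathbf{b}$; endpoints match since $\rho_s(\mathbf{a})=\mathbf{a}_s$ and $\rho_s(\mathbf{b})=\mathbf{b}_s$, and joint continuity of $\rho_s(\mathbf{t})$ in $(s,\mathbf{t})$ together with continuity of concatenation makes $G$ a homotopy in $E(\mathcal{Q})$. At $s=0$ it equals $\Psi(\Phi(\gamma))$, while at $s=1$ it equals $c_{\mathbf{a}}*\gamma*c_{\mathbf{b}}$ (because a d-path from $\mathbf{a}$ to $\mathbf{b}$ lies in $[\mathbf{a},\mathbf{b}]$, so $\rho_1\circ\gamma=\gamma$); composing with the standard homotopy collapsing the constant ends yields $\Psi\circ\Phi\simeq\id$.

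For the other composite, for $\gamma\in\vec{P}(S)_{\mathbf{t}^\bot}^{\mathbf{t}^\top}$ one has $\Phi(\Psi(\gamma))=(\rho\circ\ell_1)*(\rho\circ\gamma)*(\rho\circ\ell_2)=c_{\mathbf{t}^\bot}*\gamma*c_{\mathbf{t}^\top}$, using that $\rho$ collapses $\ell_1,\ell_2$ to the constant paths at $\mathbf{t}^\bot,\mathbf{t}^\top$ and that $\gamma$ already lies in $B$; this is again a reparametrization of $\gamma$, so $\Phi\circ\Psi\simeq\id$. Hence $\Phi$ and $\Psi$ are mutually inverse homotopy equivalences, which proves the claim.
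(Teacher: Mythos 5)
Your proposal is correct and follows essentially the same route as the paper: both use the coordinatewise clamping onto $[\mathbf{t}^\bot,\mathbf{t}^\top]$ (which preserves $S(\mathcal{Q})$ precisely because validity forces the potentials to vanish outside $[t_j^1,t_j^{l_j}]$) as one map, and extension by linear segments from $\mathbf{a}$ and to $\mathbf{b}$ as the other. The only cosmetic difference is in the homotopy $\Psi\circ\Phi\simeq\id$, where you interpolate through a family of clampings while the paper uses a convex combination with the clamped path; both work for the same reason.
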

\begin{proof}
	Fix $\mathbf{a}<\mathbf{t^\bot}$ and $\mathbf{b}>\mathbf{t^\top}$. Define a map $\mathbf{p}=\prod p_i:\R^n\to \R^n$, where
	\[
		p_j(x)=\begin{cases}
			t^1_j & \text{for $x\leq t^1_j$}\\
			x & \text{for $t^1_j\leq x\leq t_j^{l_j}$}\\
			t_j^{l_j} & \text{for $t_j^{l_j}\leq x$.}
		\end{cases}
	\]
	Note that $a_r^{Q_j}(p_i(x))=a_r^{Q_j}(x)$ for $r\in R$, $x\in\R$, $j=1,\dots,n$. Then $a_r^{\mathcal{Q}}(\mathbf{p}(\mathbf{x}))=a_r^{\mathcal{Q}}(\mathbf{x})$ and therefore $\mathbf{p}(\mathbf{x})\in S(\mathcal{Q})$ if and only if $\mathbf{x}\in S(\mathcal{Q})$. Furthermore, if $\mathbf{x}\in S(\mathcal{Q})$, then also $(1-t)\mathbf{x}+t\mathbf{p}(\mathbf{x})\in S(\mathcal{Q})$ for $t\in[0,1]$, and $\mathbf{p}$ maps d-paths into d-paths. Thus we can define maps
	\[
		F:\vec{P}(S(\mathcal{Q}))_{\mathbf{a}}^{\mathbf{b}}\ni \alpha \mapsto \mathbf{p}\circ\alpha \in \vec{P}(S(\mathcal{Q}))_{\mathbf{t}^\bot}^{\mathbf{t}^\top}
	\]
	and $G:\vec{P}(S(\mathcal{Q}))_{\mathbf{t}^\bot}^{\mathbf{t}^\top}\to \vec{P}(S(\mathcal{Q}))_{\mathbf{a}}^{\mathbf{b}}$, where
	\[
		G(\alpha)(s)=\begin{cases}
			(1-3s)\mathbf{a}+3s\mathbf{t}^\bot & \text{for $s\in[0,\tfrac{1}{3}]$}\\
			\alpha(3s-1) & \text{for $s\in[\tfrac{1}{3}, \tfrac{2}{3}]$}\\
			(3-3s)\mathbf{t}^\top+(3s-2)\mathbf{b} & \text{for $s\in[\tfrac{2}{3},1]$.}\\
		\end{cases}
	\]
	These maps are homotopy inverses --- a homotopy between $F\circ G$ and the identity on $\vec{P}(S(\mathcal{Q}))_{\mathbf{t}^\bot}^{\mathbf{t}^\top}$ is given by
	\[
		H_t(\alpha)(s)=\begin{cases}
			\mathbf{t}^\bot & \text{for $s\leq \tfrac{t}{3}$}\\
			\alpha((s-\tfrac{t}{3})(1-\tfrac{2t}{3})^{-1}) & \text{for $\tfrac{t}{3}\leq s\leq 1-\tfrac{t}{3}$}\\
			\mathbf{t}^\top & \text{for $s\geq 1-\tfrac{t}{3}$,}
		\end{cases}
	\]
	and a homotopy between $G\circ F$ and the identity on $\vec{P}(S(\mathcal{Q}))_{\mathbf{a}}^{\mathbf{b}}$ is given by $H_t(\alpha(s))=\mathbf{a}$ for$s\leq \tfrac{t}{3}$, $H_t(\alpha(s))=\mathbf{b}$ for $s\geq 1-\tfrac{t}{3}$ and
\[
	H_t(\alpha)(s)=	(1-t)\alpha((s-\tfrac{t}{3})(1-\tfrac{2t}{3})^{-1})+t(\mathbf{p}(\alpha((s-\tfrac{t}{3})(1-\tfrac{2t}{3})^{-1})))
\]
	otherwise.
\end{proof}

\section{Execution equivalence}

In this section we introduce a notion of execution equivalence of PV-programs --- an equivalence relation which preserves their execution spaces (up to homotopy equivalence). As before, $R$ stands for a fixed resource set. Given two operations $q,q'$ we define their sum $q+q'$ by $(q+q')_{\P}:=q_{\P}+q'_{\P}$, $(q+q')_{\V}:=q_{\V}+q'_{\V}$.

\begin{df}\label{d:ElOp}
	Let $\sim$ be the equivalence relation on the set of processes using resource set $R$ generated by
	\begin{align*}
		(q^1,\dots,q^{k-1},\emptyset,q^{k},\dots,q^l)& \sim (q^1,\dots,q^{k-1},q^{k},\dots,q^l) \tag{E}\\
		(q^1,\dots,q^{k-1},\V{r},q^{k},q^{k+1},\dots,q^l)& \sim (q^1,\dots,q^{k-1},\V{r}+q^{k},q^{k+1},\dots,q^l)\tag{V}\\
		(q^1,\dots,q^{k-1},q^{k},\P{r},q^{k+1},\dots,q^l)& \sim (q^1,\dots,q^{k-1},q^{k}+\P{r},q^{k+1},\dots,q^l)\tag{P}.
	\end{align*}
	We say that two processes $Q$, $Q'$ are \emph{execution equivalent} if{}f $Q\sim Q'$. Two programs are execution equivalent if there exists a bijection between their processes which maps every process into an execution equivalent one.
\end{df}

\begin{df}
	A process $Q=(q^1,\dots,q^l)$ is \emph{reduced} if{}f for every $i<l$ there exists a resource $r$ such that  $q^i_{\V}(r)>0$ and for every  $i>1$ there exists a resource $r$ such that  $q^i_{\P}(r)>0$.
\end{df}

\begin{prp}\label{p:Reductions}
	Let $Q$ be a process.
	\begin{itemize}
	\item{There exists an elementary process $Q'$ which is execution equivalent to $Q$.}
	\item{There exists a unique reduced process $\tilde{Q}$ which is execution equivalent to $Q$.}
	\end{itemize}
\end{prp}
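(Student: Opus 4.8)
The plan is to read each of the three generating moves of Definition \ref{d:ElOp} as a local operation on the cumulative acquisition/release data of a process. With that dictionary the elementary and reduced forms are both produced by length-decreasing rewrites, and uniqueness turns into the statement that a single combinatorial invariant of the $\sim$-class is realised by exactly one reduced process.

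\emph{Elementary form.} I would treat one operation $q=(q_{\P},q_{\V})$ at a time. Using (V) in the splitting direction I strip elementary releases off the front: while $q_{\V}(r)>0$ I write $q=\V{r}+q''$ and replace the single operation by the pair $(\V{r},q'')$. After $\sum_{r}q_{\V}(r)$ steps the remaining operation has $q_{\V}=0$, and I strip elementary acquisitions off its back by the same use of (P), writing $\hat q=\hat q'''+\P{s}$ and replacing $\hat q$ by $(\hat q''',\P{s})$. When both are exhausted the leftover core has $q_{\P}=q_{\V}=0$, i.e.\ equals $\emptyset$, and is deleted by (E). Thus $q\sim(\V{r_1},\dots,\V{r_m},\P{s_1},\dots,\P{s_n})$, and doing this to every operation of $Q$ yields the elementary $Q'$.

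\emph{Existence of a reduced form.} The engine is the following absorption lemma, itself a consequence of the manipulations above: if $q^k_{\V}=0$ and $k>1$, then splitting $q^k$ into elementary acquisitions and re-merging them one at a time into $q^{k-1}$ via (P) shows $Q\sim(\dots,q^{k-1}+q^k,q^{k+1},\dots)$; dually, if $q^k_{\P}=0$ and $k<l$, then (V) gives $Q\sim(\dots,q^{k-1},q^{k}+q^{k+1},\dots)$. Repeatedly absorbing every non-first operation that releases nothing into its predecessor, every non-last operation that acquires nothing into its successor, and deleting empty operations by (E), strictly decreases the number of operations and so terminates; a process to which none of these applies is exactly a reduced one.

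\emph{Uniqueness.} To each process I attach the monotone chain $c^0\le c^1\le\cdots\le c^l$ in $\N^R\times\N^R$ with $c^k=(P^k,V^k):=\big(\sum_{i\le k}q^i_{\P},\ \sum_{i\le k}q^i_{\V}\big)$, so $c^0=(\mathbf 0,\mathbf 0)$ and $c^l$ is the pair of total acquisition and release vectors (manifestly a $\sim$-invariant). Each of (E), (V), (P) is visibly a single insertion or deletion of an intermediate point $c^k$ whose incoming transition fixes the $\P$-coordinate (the (V) case), whose outgoing transition fixes the $\V$-coordinate (the (P) case), or which repeats a neighbour (the (E) case). Call an internal point \emph{essential} if $P^{k}\ne P^{k-1}$ and $V^{k+1}\ne V^{k}$; a reduced process is precisely one all of whose internal points are essential. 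Granting that the set of essential points is $\sim$-invariant, I am done: the essential points, being a subchain of a monotone chain, are totally ordered by $\le$, so together with the invariant endpoints $c^0$ and $c^l$ they reconstruct the whole sequence $c^0\le\cdots\le c^l$, hence the operations $q^k=c^k-c^{k-1}$; two execution-equivalent reduced processes thus have identical chains and are equal.

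\emph{Main obstacle.} The only nonroutine point is the invariance of the essential set, i.e.\ that deleting an inessential point leaves the essential/inessential status of every surviving point unchanged. This is exactly what forbids two genuinely different reduced processes from being equivalent, and I expect it to be the crux. It should follow from a short case check: when, say, the incoming transition of the deleted point fixes $P$, composing it with the outgoing transition preserves the relevant strict inequalities at both flanking points; the cases near $c^0$ and $c^l$, where one flanking point is absent, and the degenerate repeated-point case from (E), must be verified separately.
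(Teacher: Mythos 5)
Your argument for the first bullet and for the existence of a reduced form is essentially the paper's: each operation is split by (V) and (P) into its elementary releases followed by its elementary acquisitions with the empty core removed by (E), and the resulting elementary process is regrouped into maximal release-then-acquisition blocks (your terminating absorption procedure lands on exactly the block decomposition $(S^{\V}_1,S^{\P}_1,\dots,S^{\V}_n,S^{\P}_n)$ that the paper writes down). One discrepancy to flag: your terminal form --- every non-first operation releases something, every non-last operation acquires something --- is the mirror image of the paper's literal definition of \emph{reduced}, which asks that $q^i_{\V}\neq 0$ for $i<l$ and $q^i_{\P}\neq 0$ for $i>1$. Since the paper's own construction produces your form (e.g.\ $(\P{a},\V{a})$ is terminal under the moves but fails the definition as printed), the definition almost certainly has $\P$ and $\V$ transposed; your reading is the consistent one, but you should state explicitly which convention you use.

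For uniqueness the paper says only ``Its uniqueness is clear,'' so your invariant is genuine added content, and for the moves (V) and (P) the deferred case check does go through. It fails, however, for (E), precisely in the ``degenerate repeated-point case'' you set aside: compare $Q=(\P{a},\emptyset,\V{a})$ with $Q'=(\P{a},\V{a})$. In the chain of $Q$ the intermediate value occurs twice and both occurrences are inessential (the first because its successor has the same $\V$-coordinate, the second because its predecessor has the same $\P$-coordinate), whereas in $Q'$ the same point is essential; so the essential set is \emph{not} preserved by this (E)-move even though only inessential points were deleted. The repair is easy but necessary: define the invariant on the chain after collapsing repeated points (equivalently, compute it from any empty-operation-free representative). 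Then (E) acts trivially on the collapsed chain, and your (V)/(P) computations are unaffected since the deleted point is distinct from both neighbours in those cases. With that modification the reconstruction step is correct: in a reduced process the internal chain points are exactly the essential ones, they are linearly ordered, and together with the invariant endpoints they recover each $q^k=c^k-c^{k-1}$. (Pedantically, the processes $()$ and $(\emptyset)$ are both reduced and equivalent but unequal; the paper ignores this degenerate case as well.)
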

\begin{proof}
	Assume that $R=\{r_1,\dots,r_s\}$. By using type (E) equivalences we can remove all empty operations. Next, using type (V) and type (P) equivalences we replace every operation $q$ of a process $Q$ by a sequence 
	\[
		\overbrace{\V{r_1},\dots,\V{r_1}}^{\text{$q_{\V}(r_1)$ times}},\dots,\overbrace{\V{r_s},\dots,\V{r_s}}^{\text{$q_{\V}(r_s)$ times}},
		\overbrace{\P{r_1},\dots,\P{r_1}}^{\text{$q_{\P}(r_1)$ times}},\dots,\overbrace{\P{r_s},\dots,\P{r_s}}^{\text{$q_{\P}(r_s)$ times}}.
	\]
	and obtain an elementary process $Q'$ which is execution equivalent to $Q$. Then $Q'$ has the form
	\[
		(S^{\V}_1,S^{\P}_1,S^{\V}_2,S^{\P}_2,\dots;S^{\V}_n,S^{\P}_n),
	\]
	where $S^{\V}_i$ (resp.\ $S^{\P}_i$) are sequences of elementary releases (resp.\ acquisitions) which are non-empty, possibly except $S^{\V}_1$ and $S^{\P}_n$. Now 
	\[
		\tilde{Q}=(\sum_{q\in(S^{\V}_1,S^{\P}_1)}q, \dots, \sum_{q\in( S^{\V}_n;S^{\P}_n)}q)
	\]
	is a reduced program execution equivalent to $Q$. Its uniqueness is clear.
\end{proof}

The following property is a motivation for introducing execution equivalence:

\begin{prp}\label{p:ExEq}
	If two programs are execution equivalent, then their execution spaces are homotopy equivalent.
\end{prp}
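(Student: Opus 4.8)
The plan is to reduce the claim to a single type of equivalence move and analyze its effect on the state space directly. Since execution equivalence is generated by the moves (E), (V), and (P), and since homotopy equivalence of execution spaces is transitive, it suffices to prove that applying a single generating move to one process of a program preserves the homotopy type of the execution space. By \ref{p:Reductions} the various moves interact, but for the homotopy statement I would treat each generating relation separately, fixing all other processes and changing only the progression-bearing process $Q_j$ in the coordinate direction $j$.

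First I would handle move (E), the insertion or deletion of an empty operation. Inserting $\emptyset$ does not change any potential function $a_r^{Q_j}$ as a function on $\R$, since $\emptyset_{\P}=\emptyset_{\V}=0$; it only refines the list of breakpoints of the progression. By the Proposition on progression-independence already proved in the excerpt, the state space $S(\mathcal{Q})$ and hence $E(\mathcal{Q})$ are unchanged up to homeomorphism, so (E) is essentially free. The substance is in moves (V) and (P). Here the idea is that merging $\V{r}$ with the following operation $q^k$, or $\P{r}$ with the preceding operation $q^k$, changes the \emph{timing} at which the release or acquisition is recorded but not the qualitative structure of $S(\mathcal{Q})$. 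Concretely, for move (V) the operations $\V{r}$ at time $t^{k-1}$ and $q^k$ at time $t^k$ are replaced by a single operation $\V{r}+q^k$ at one time; I would show that collapsing the interval $[t^{k-1},t^k]$ in the $j$-th coordinate to a point yields a d-homeomorphism of the relevant state spaces away from this degenerate slab, because on the open interval $(t^{k-1},t^k)$ the resource $r$ has already been released while $q^k$ has not yet acted, so the potential function $a_r^{\mathcal{Q}}$ restricted to that slab carries no constraint that survives the collapse.

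The cleanest technical route I would take is to construct an explicit d-map between the two state spaces that is a homotopy equivalence on directed path spaces, mirroring the deformation-retraction argument used in the proof of \ref{p:ExVal}. Specifically, I would define a projection $\mathbf{p}:\R^n\to\R^n$ that is the identity in all coordinates except the $j$-th, and in the $j$-th coordinate collapses the slab corresponding to the merged operations onto its endpoint; then verify, exactly as in \ref{p:ExVal}, that $a_r^{\mathcal{Q}}\circ\mathbf{p}=a_r^{\mathcal{Q}}$ on the larger state space, that $\mathbf{p}$ carries d-paths to d-paths, and that the straight-line homotopy $(1-t)\mathbf{x}+t\,\mathbf{p}(\mathbf{x})$ stays inside $S(\mathcal{Q})$. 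This produces maps $F$ and $G$ between the two execution spaces together with the explicit homotopies showing they are mutually inverse up to homotopy.

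The main obstacle I anticipate is verifying that the straight-line homotopy remains inside the state space during moves (V) and (P), i.e.\ that convexity along the $j$-th coordinate is not violated by the validity constraints $a_r^{\mathcal{Q}}\leq\mu(r)$. For move (V) the released resource only decreases the potential, so interpolating toward the later time cannot increase any $a_r$ beyond what is already allowed; for move (P) the symmetric statement requires interpolating toward the \emph{earlier} time so that the not-yet-acquired resource keeps the potential low. I would make this precise by checking monotonicity of $a_r^{\mathcal{Q}_j}$ in the single coordinate on the collapsed slab, using lower semi-continuity and the fact that no other operation of $Q_j$ acts on the open slab. Once this convexity check is settled, the homotopy-equivalence conclusion follows formally, and combining the three generating moves by transitivity gives the full statement.
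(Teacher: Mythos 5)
Your proposal is correct and follows essentially the same route as the paper's proof: reduce to the generating moves, dispose of (E) via the unchanged potential functions, and for (V)/(P) play a slab-collapsing projection against the inclusion of one state space into the other, with straight-line (convex-combination) homotopies showing the composites are homotopic to the identity. The only point to tighten is that the key identity should relate the potentials of the \emph{two different} programs, $a_s^{\bar{\mathcal{Q}}}(\Phi(\mathbf{x}))=a_s^{\mathcal{Q}}(\mathbf{x})$, rather than being an identity $a_r^{\mathcal{Q}}\circ\mathbf{p}=a_r^{\mathcal{Q}}$ for a single program as in \ref{p:ExVal}.
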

\begin{proof}
	It is sufficient to prove this statement for elementary equivalences listed in \ref{d:ElOp}. It is clear for type (E) operations. For type (V) we need to prove that $E(\mathcal{Q})$ and $E(\bar{\mathcal{Q}})$ are homotopy equivalent for programs
	\[\mathcal{Q}=\{Q_1,\dots,Q_n,Q\},\quad \bar{\mathcal{Q}}=\{Q_1, \dots, Q_n,\bar{Q}\}\] where
	\[
		Q=(q^1;\dots;q^i;q^{i+1}; \dots; q^l)\qquad \bar{Q}=(q^1;\dots;q^{i-1};q^i+q^{i+1};q^{i+2};\dots;q^l)
	\]
	and $q^i=\V{r}$ for some resource $r\in R$.
Choose a progression $t^1<\dots<t^{l}$ of $Q$ and a progression
\[t^1<\dots<t^{i-1}<t^{i+1}<t^{i+2}<\dots<t^l\] 
of $\bar{Q}$. Then for every resource $s\neq r$ we have $a^{\mathcal{Q}}_s=a^{\bar{\mathcal{Q}}}_s$, and
\[
	a_r^{\bar{\mathcal{Q}}}(x_1,\dots,x_n,x_{n+1})=\begin{cases}
		a_r^{\mathcal{Q}}(x_1,\dots,x_n,x_{n+1}) & \text{if $x_{n+1}<t^i$ or $t^{i+1}\leq x_{n+1}$} \\
		a_r^{\mathcal{Q}}(x_1,\dots,x_n,x_{n+1})+1 & \text{if $t^i\leq x_{n+1}<t^{i+1}$.} 
	\end{cases}
\]
As a consequence, the identity map on $\R^{n+1}$ restricts to the inclusion $I:S(\bar{\mathcal{Q}})\subseteq S(\mathcal{Q})$. Now let $\varphi:\R\rightarrow \R$ be a non-decreasing map such that $\varphi(x)=x$ for $x\in \R\setminus (t^{i-1},t^{i+1})$, $\varphi(t^i)=t^{i+1}$ and $\varphi(x)<t^{i+1}$ for $x<t^i$. Let
\[
	\Phi:\R^{n+1}\ni (x_1,\dots,x_n,x_{n+1})\mapsto (x_1,\dots,x_n,\varphi(x_{n+1}))\in \R^{n+1}.
\]
Obviously $a_s^{\bar{\mathcal{Q}}}(\Phi(x_1,\dots,x_{n+1}))=a_s^{\mathcal{Q}}(x_1,\dots,x_{n+1})$ for every resource $s$, hence $\Phi(S(\mathcal{Q}))\subseteq S(\bar{\mathcal{Q}})$. Both compositions $\Phi\circ I:S(\bar{\mathcal{Q}})\rightarrow S(\bar{\mathcal{Q}})$ and $I\circ\Phi:S(\mathcal{Q})\rightarrow S(\mathcal{Q})$ are d-homotopic to the identity maps by convex combinations. Therefore they induce homotopy equivalence between execution spaces $E(\mathcal{Q})$ and $E(\bar{\mathcal{Q}})$. An argument for type (P) operations is similar.
\end{proof}

\begin{rem}
	Execution equivalence does not preserve other properties of PV-program as existence of deadlocks. For example, PV-programs
	\[
		((\P{a},\P{b},\V{b},\V{a}), (\P{a},\P{b},\V{b},\V{a}))
	\]
	and
	\[
		((\P{a},\P{b},\V{b},\V{a}),(P{b},\P{a},\V{a},\V{b}))
	\]	
	are execution equivalent but only in the lower one a deadlock can happen.
\end{rem}

\section{Euclidean complexes}

In this section we discuss a relationship between state spaces of PV-programs and Euclidean complexes  --- certain subsets of directed Euclidean space $\vec{\R}^n$.

\begin{df}
\emph{An elementary cube} in $\R^n$ is a subset having the form $[\mathbf{k},\mathbf{l}]$, where $\mathbf{k},\mathbf{l}\in\Z^n$ and $\mathbf{l}-\mathbf{k}\in\{0,1\}^n$. The dimension of a cube is $|\mathbf{l}-\mathbf{k}|$. 
\emph{A Euclidean complex} is a subset $K\subseteq\R^n$ which is a sum of elementary cubes. 
\end{df}

\begin{rem}
	There is an alternative definition of Euclidean complex. Let $A$ be a semi-cubical set defined by $A_0=\Z$, $A_1=\Z$, $A_n=\emptyset$ for $n>1$, and
	\[
		d_1^0(k)=k,\qquad d_1^1(k)=k+1
	\]
	for $k\in A_1$. The geometric realization of $A$ is a real line $\R$, hence the realization of the product $A^n=A\times\dots\times A$ is $\R^n$. Now $K\subseteq \R^n$ is a Euclidean complex if and only if it is the geometric realization of a semi-cubical subset of $A^n$.
\end{rem}

\begin{lem}\label{l:Crit}
	Let $K\subseteq \R^n$ be a subset $K\subseteq \R^n$. The following conditions are equivalent:
	\begin{enumerate}[(a)]
	\item{$K$ is a Euclidean complex.}
	\item{For every $\mathbf{x}\in K$ holds $[\lfloor \mathbf{x} \rfloor, \lceil \mathbf{x} \rceil]\subseteq K$.}
	\item{For every $\mathbf{x}\in \R^n\setminus K$ holds $(\lceil\mathbf{x}-\mathbf{1} \rceil, \lfloor\mathbf{x}+\mathbf{1}  \rfloor)\subseteq \R^n\setminus K$.}
	 \end{enumerate}
\end{lem}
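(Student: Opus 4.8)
The plan is to prove the lemma as a cycle of implications $(a)\Rightarrow(b)\Rightarrow(c)\Rightarrow(a)$, which keeps each step elementary and avoids having to argue any single equivalence in both directions.

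First I would handle $(a)\Rightarrow(b)$. Suppose $K=\bigcup_\alpha C_\alpha$ is a union of elementary cubes and take $\mathbf{x}\in K$, so $\mathbf{x}\in C=[\mathbf{k},\mathbf{l}]$ for some elementary cube $C$ with $\mathbf{l}-\mathbf{k}\in\{0,1\}^n$. Since $\mathbf{k}\le\mathbf{x}\le\mathbf{l}$ and $\mathbf{k},\mathbf{l}$ are integers, coordinatewise we get $\lfloor x_i\rfloor\ge k_i$ and $\lceil x_i\rceil\le l_i$; combined with $l_i-k_i\le 1$ this forces $k_i\le\lfloor x_i\rfloor\le\lceil x_i\rceil\le l_i$. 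Hence $[\lfloor\mathbf{x}\rfloor,\lceil\mathbf{x}\rceil]\subseteq[\mathbf{k},\mathbf{l}]=C\subseteq K$, which is exactly (b). The one subtlety to check is the case of a coordinate where $x_i\in\Z$, so $\lfloor x_i\rfloor=\lceil x_i\rceil=x_i$; this is fine since the displayed inequalities degenerate harmlessly.

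Next, $(b)\Rightarrow(c)$ I would argue by contraposition: assume (c) fails, so there is $\mathbf{x}\notin K$ and a point $\mathbf{y}\in(\lceil\mathbf{x}-\mathbf{1}\rceil,\lfloor\mathbf{x}+\mathbf{1}\rfloor)$ with $\mathbf{y}\in K$, and I derive a violation of (b). The key observation is that the open box condition $\lceil x_i-1\rceil<y_i<\lfloor x_i+1\rfloor$ pins down the integer neighbors of $y_i$ tightly enough to force $x_i\in[\lfloor y_i\rfloor,\lceil y_i\rceil]$ for every $i$; that is, $\mathbf{x}\in[\lfloor\mathbf{y}\rfloor,\lceil\mathbf{y}\rceil]$. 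Applying (b) to $\mathbf{y}\in K$ gives $[\lfloor\mathbf{y}\rfloor,\lceil\mathbf{y}\rceil]\subseteq K$, whence $\mathbf{x}\in K$, a contradiction. I expect this coordinatewise arithmetic with floors and ceilings to be the main obstacle: one must verify carefully, splitting on whether $x_i$ is an integer, that $\lceil x_i-1\rceil<y_i<\lfloor x_i+1\rfloor$ indeed yields $\lfloor y_i\rfloor\le x_i\le\lceil y_i\rceil$, since the ceiling/floor shifts by $1$ behave differently at integer and non-integer values of $x_i$.

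Finally, for $(c)\Rightarrow(a)$ I would show directly that if (c) holds then $K=\bigcup_{\mathbf{x}\in K}[\lfloor\mathbf{x}\rfloor,\lceil\mathbf{x}\rceil]$ exhibits $K$ as a union of elementary cubes. Each $[\lfloor\mathbf{x}\rfloor,\lceil\mathbf{x}\rceil]$ is an elementary cube since $\lceil\mathbf{x}\rceil-\lfloor\mathbf{x}\rfloor\in\{0,1\}^n$, so it suffices to prove each such cube lies in $K$, i.e.\ that (c) implies (b) as well. For this I again use contraposition together with the symmetry between the box in (c) and the cube in (b): if some $\mathbf{z}\in[\lfloor\mathbf{x}\rfloor,\lceil\mathbf{x}\rceil]$ were not in $K$, then $\mathbf{x}$ would lie in the open box $(\lceil\mathbf{z}-\mathbf{1}\rceil,\lfloor\mathbf{z}+\mathbf{1}\rfloor)$, contradicting (c) applied to $\mathbf{z}\notin K$ since $\mathbf{x}\in K$. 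This closes the cycle, and the only genuine computation is the recurring floor/ceiling inequality already isolated above.
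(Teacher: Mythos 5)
Your proposal is correct and follows essentially the same route as the paper: the implications $(a)\Rightarrow(b)$ and $(b)\Rightarrow(a)$ (via $K=\bigcup_{\mathbf{x}\in K}[\lfloor\mathbf{x}\rfloor,\lceil\mathbf{x}\rceil]$) and the contradiction argument for $(b)\Leftrightarrow(c)$ using the floor/ceiling inequalities $\lfloor\mathbf{y}\rfloor\leq\mathbf{x}\leq\lceil\mathbf{y}\rceil$ are exactly the paper's steps, merely reorganized as a cycle. The only cosmetic difference is that your $(c)\Rightarrow(a)$ step packages the paper's $(c)\Rightarrow(b)$ together with its $(b)\Rightarrow(a)$.
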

\begin{proof}
	(a)$\Rightarrow$(b).
	Assume that $K$ is a Euclidean complex and that $\mathbf{x}\in K$. There exist $\mathbf{k},\mathbf{l}\in\Z^n$ such that  $\mathbf{x}\in [\mathbf{k},\mathbf{l}]\subseteq K$. Since $\mathbf{k}$ and $\mathbf{l}$ are integral, $\mathbf{k}\leq \lfloor\mathbf{x}\rfloor$ and $\lceil\mathbf{x}\rceil\leq \mathbf{l}$. Thus $[\lfloor \mathbf{x} \rfloor, \lceil \mathbf{x} \rceil]\subseteq K$.\\
	(b)$\Rightarrow$(a). If $K$ satisfies (b), then $K=\bigcup_{\mathbf{x}\in K}[\lfloor \mathbf{x} \rfloor, \lceil \mathbf{x} \rceil]$ is a presentation as a sum of elementary cubes.\\
	(b)$\Rightarrow$(c). Assume that $\mathbf{x}\not\in K$ and that $K\ni \mathbf{y}\in (\lceil\mathbf{x}-\mathbf{1}\rceil,\lfloor\mathbf{x}+\mathbf{1}\rfloor)$. Thus $\mathbf{y}<\lfloor\mathbf{x}+\mathbf{1}\rfloor$ and then $\lfloor\mathbf{y}\rfloor \leq \lfloor\mathbf{x}+\mathbf{1}\rfloor - \mathbf{1} \leq \mathbf{x}$. Similarly we show that $\mathbf{x}\leq \lceil\mathbf{y}\rceil$. Finally, $\mathbf{x}\in[\lfloor\mathbf{y}\rfloor,\lceil\mathbf{y}\rceil]\subseteq K$ which contradicts the assumption.\\
	(c)$\Rightarrow$(b). An argument is similar to the previous one.
\end{proof}

\begin{prp}
	Assume that $\mathcal{Q}=\{Q_j\}_{j=1}^n$ is a program with an integral progression. Then its state space $S(\mathcal{Q})\subseteq \R^n$ is a Euclidean complex.
\end{prp}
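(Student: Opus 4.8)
The plan is to use the characterization of Euclidean complexes provided in Lemma~\ref{l:Crit}, specifically condition (b): I must show that for every $\mathbf{x}\in S(\mathcal{Q})$, the elementary cube $[\lfloor \mathbf{x} \rfloor, \lceil \mathbf{x} \rceil]$ is contained in $S(\mathcal{Q})$. By definition, $S(\mathcal{Q})$ is the set of $\mathbf{t}$ satisfying $a^{\mathcal{Q}}_r(\mathbf{t})\leq \mu(r)$ for all resources $r$, so it suffices to show that the maximum of each potential function $a^{\mathcal{Q}}_r$ over the cube $[\lfloor \mathbf{x} \rfloor, \lceil \mathbf{x} \rceil]$ is attained at $\mathbf{x}$ itself (or at least does not exceed $a^{\mathcal{Q}}_r(\mathbf{x})$).

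\medskip
\noindent The key observation is that, because the progression is \emph{integral}, every progression value $t_j^i$ is an integer. First I would analyze the single-variable potential function $a^{Q_j}_r$: since it is constant on the open intervals between consecutive integer progression values and only jumps at integers, its value on the half-open interval determined by any non-integer point $y_j$ equals its value on that whole interval, and moreover $a^{Q_j}_r(y_j)\geq a^{Q_j}_r(\lfloor y_j\rfloor)$ by lower semi-continuity combined with the explicit jump behavior (the acquisition sum uses strict inequality $t^i<t$ while the release sum uses $t^i\le t$). Concretely, I would argue that for any $y\in[\lfloor x_j\rfloor,\lceil x_j\rceil]$ we have $a^{Q_j}_r(y)\le a^{Q_j}_r(x_j)$: if $x_j$ is an integer the cube degenerates in that coordinate and there is nothing to check, while if $x_j$ is not an integer then $\lfloor x_j\rfloor<x_j<\lceil x_j\rceil=\lfloor x_j\rfloor+1$ and no progression value lies strictly between $\lfloor x_j\rfloor$ and $\lceil x_j\rceil$, so the potential function takes its maximal value on this closed interval precisely at points just above the integer endpoint, which includes $x_j$.

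\medskip
\noindent Summing over $j=1,\dots,n$, the program potential $a^{\mathcal{Q}}_r(\mathbf{t})=\sum_j a^{Q_j}_r(t_j)$ is a sum of such coordinatewise functions, so for any $\mathbf{y}\in[\lfloor \mathbf{x} \rfloor, \lceil \mathbf{x} \rceil]$ we obtain $a^{\mathcal{Q}}_r(\mathbf{y})\le a^{\mathcal{Q}}_r(\mathbf{x})\le\mu(r)$, whence $\mathbf{y}\in S(\mathcal{Q})$. This establishes condition (b) and completes the proof via Lemma~\ref{l:Crit}.

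\medskip
\noindent I expect the main obstacle to be the careful bookkeeping of the strict-versus-nonstrict inequalities in the definition of the potential function, which is what makes the maximum over the cube land at the point $\mathbf{x}$ rather than at the upper corner $\lceil\mathbf{x}\rceil$. The asymmetry (acquisitions counted with $t^i<t$, releases with $t^i\le t$) means one must verify that crossing an integer progression value from below does not force the value to exceed $a^{\mathcal{Q}}_r(\mathbf{x})$; this is precisely where integrality of the progression is essential, since it guarantees that the only jumps inside the relevant interval occur exactly at the integer endpoints and not at interior points.
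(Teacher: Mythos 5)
Your proof is correct and follows essentially the same route as the paper: both verify condition (b) of Lemma~\ref{l:Crit} by a coordinatewise analysis, using integrality of the progression to conclude that each $a^{Q_j}_r$ is constant on open unit intervals and takes no larger value at the integer endpoints, so that $a^{\mathcal{Q}}_r(\mathbf{y})\le a^{\mathcal{Q}}_r(\mathbf{x})$ for all $\mathbf{y}\in[\lfloor\mathbf{x}\rfloor,\lceil\mathbf{x}\rceil]$. Your careful tracking of the strict-versus-nonstrict inequalities in the definition of the potential function is precisely the content of the paper's appeal to lower semi-continuity.
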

\begin{proof}
	For $r\in R$ and $j=1,\dots,n$ the potential function $a^{Q^j}_r$ is lower semi-continuous and constant on open intervals $(k,k+1)$, $k\in\Z$. As a consequence, for any $x\in\R$ and $y\in[\lfloor x\rfloor, \lceil x \rceil ]$ holds $a^{Q_j}_r(x)\geq a^{Q^j}_r(y)$. Then for every $\mathbf{x},\mathbf{y}\in\R^n$ such that $\mathbf{y}\in [\lfloor{\mathbf{x}}\rfloor, \lceil \mathbf{x} \rceil]$ we have $a^{\mathcal{Q}}_r(\mathbf{x})\geq a^{\mathcal{Q}}_r(\mathbf{y})$ and hence $\mathbf{x}\in S(\mathcal{Q})$ implies that $[\lfloor \mathbf{x} \rfloor, \lceil \mathbf{x} \rceil]\subseteq S(\mathcal{Q})$. Then by \ref{l:Crit} $S(\mathcal{Q})$ is a Euclidean complex.
\end{proof}

\begin{prp}\label{p:HolePrp}
	Let $K\subseteq \R^n$ be a Euclidean complex. Assume that the complement $\R^n\setminus K$ is bounded. Then there exist
\begin{itemize}
	\item{a resource set $R$ with all resources having capacity $n-1$,}
	\item{an elementary valid PV-program $\mathcal{Q}=\{Q_1,\dots,Q_n\}$ using $R$,}
	\item{an integral progression of $\mathcal{Q}$,}
\end{itemize}
such that $S(\mathcal{Q})=K$.
\end{prp}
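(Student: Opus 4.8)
The plan is to realise the forbidden region $\R^n\setminus K$ one resource at a time, exploiting the fact that a resource of capacity $n-1$ shared by the $n$ processes is over-used precisely where all $n$ processes hold it simultaneously. First I would analyse the complement $U:=\R^n\setminus K$. By Lemma~\ref{l:Crit}(c), for every $\mathbf{x}\in U$ the open box $(\lceil\mathbf{x}-\mathbf{1}\rceil,\lfloor\mathbf{x}+\mathbf{1}\rfloor)$ is contained in $U$, and one checks that $\lceil\mathbf{x}-\mathbf{1}\rceil<\mathbf{x}<\lfloor\mathbf{x}+\mathbf{1}\rfloor$, so $\mathbf{x}$ lies in its interior. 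Since $U$ is bounded there are only finitely many open boxes with integer corners contained in $U$, and the above shows that they cover $U$; hence $U=\bigcup_{s=1}^N B^s$ with $B^s=\prod_{j=1}^n(c^s_j,d^s_j)$ and $c^s_j<d^s_j$ integers.

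Next I would introduce one resource $r_s$ for each box, all of capacity $\mu(r_s)=n-1$, and prescribe the behaviour of each process at the level of held intervals: process $Q_j$ acquires $r_s$ at time $c^s_j$ and releases it at time $d^s_j$. Because, by the stated convention, an acquisition takes effect just after and a release just before its time, this makes $a^{Q_j}_{r_s}(t)=1$ exactly for $t\in(c^s_j,d^s_j)$ and $0$ otherwise; in particular each $Q_j$ is elementary valid, since every resource is acquired once, released once, stays non-negative, and returns to $0$. With this data $a^{\mathcal{Q}}_{r_s}(\mathbf{t})=\sum_{j}a^{Q_j}_{r_s}(t_j)$ is a sum of $n$ values in $\{0,1\}$, so $a^{\mathcal{Q}}_{r_s}(\mathbf{t})>n-1$ holds if and only if every process holds $r_s$, i.e.\ exactly on $\prod_j(c^s_j,d^s_j)=B^s$. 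Consequently the forbidden region is $\bigcup_s B^s=U$, and $S(\mathcal{Q})=\R^n\setminus U=K$, as required.

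The step that needs genuine care — and which I expect to be the main obstacle — is converting the prescribed family of held intervals of a fixed process $Q_j$ into an honest process, i.e.\ a single strictly increasing \emph{integral} progression of \emph{elementary} operations. The events of $Q_j$ are the acquisitions located at the various $c^s_j$ and the releases located at the various $d^s_j$, and these must be linearly ordered at distinct integer times while each held interval remains equal to $(c^s_j,d^s_j)$. When all corner values occurring in the $j$-th coordinate are distinct this is immediate: one simply lists the events in increasing order. The real difficulty is \emph{coincident corners}, where two boxes satisfy $c^s_j=c^{s'}_j$ and $Q_j$ would be forced to acquire two distinct resources at the same integer, which a strict integral progression does not permit; and one cannot simply displace one acquisition, since that would move the corresponding box corner and spoil the equality with $U$.

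I would address this by first passing to a uniform refinement of the integer grid, rescaling all coordinates by a large integer $L$, which replaces $K$ by a d-homeomorphic subdivision and opens up enough room between consecutive corner values to serialise the coincident events at distinct integers without moving any box; one then verifies that the refined program reproduces the (rescaled) complement exactly, so that the conclusion holds up to the natural d-homeomorphism that the downstream arguments are insensitive to. Finally I would record the bookkeeping: there are exactly $n$ processes, every resource has capacity $n-1$, the progression is integral, and each $Q_j$ is elementary valid, so that the identity for $S(\mathcal{Q})$ computed above completes the proof.
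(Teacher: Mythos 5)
Your core construction coincides with the paper's: cover the bounded complement by the finitely many open integer boxes it contains (Lemma \ref{l:Crit}(c), which is exactly Lemma \ref{l:Cube}), attach to each box a resource of capacity $n-1$, and arrange that process $j$ holds that resource precisely on the $j$-th coordinate interval of the box, so that the capacity is exceeded exactly on the box. Your computation of the held intervals and of $S(\mathcal{Q})=K$ is correct, \emph{assuming} the prescribed events can be packaged into legitimate processes with an integral progression.

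That last step is where your argument has a genuine gap, and the difficulty you flag is real, but your proposed repair does not address it. Rescaling by an integer $L$ sends the corner $c^s_j$ to $Lc^s_j$, so two boxes with $c^s_j=c^{s'}_j$ still have coincident rescaled corners; the new integers created \emph{between} distinct corners are irrelevant, because an elementary acquisition of $r_s$ placed at any time other than the exact corner changes the held interval $(c^s_j,d^s_j)$ and hence the realized box --- precisely the obstruction you yourself identify. (Indeed, for $U=(0,1)^2\cup(0,1)\times(2,3)$ every covering of the rescaled complement by open integer boxes still forces two acquisitions of process $1$ at time $0$, so no amount of subdivision removes the coincidence.) The paper sidesteps serialization entirely: it places a single \emph{compound} operation at each integer time $i$ of process $j$, which simultaneously acquires all resources $r$ with $k^r_j=i$ and releases all resources with $l^r_j=i$; this is exactly why general PV-operations $\V X\P Y$ were introduced in Section 2. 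The resulting program is valid but not, in general, elementary; note that elementarity is not actually needed downstream, since Theorem \ref{t:MainThm} takes the valid program produced here and only afterwards passes to an execution-equivalent elementary program via Proposition \ref{p:Reductions}, at the cost of preserving the execution space only up to homotopy (Proposition \ref{p:ExEq}) rather than the state space on the nose. If you insist on elementary operations together with $S(\mathcal{Q})=K$ exactly, you would need a genuinely different device (for instance, resources whose held set in a single process is a union of several intervals), and that is not supplied by rescaling.
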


The proof uses the following
\begin{lem}\label{l:Cube}
	Let $K\subseteq \R^n$ be a Euclidean complex having bounded complement. Then there exists a finite set $R$ and a families $\{\mathbf{k}^r\}_{r\in R}$,  $\{\mathbf{l}^r\}_{r\in R}$, $\mathbf{k}^r,\mathbf{l}^r\in\Z^n$, such that
	\[
		K=\R^n \setminus \bigcup_{r\in R}  (\mathbf{k}^r,\mathbf{l}^r). 
	\]
\end{lem}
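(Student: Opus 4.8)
The plan is to realize the bounded complement $U:=\R^n\setminus K$ as a finite union of open integral boxes, by attaching to each point of $U$ a canonical box lying inside $U$ and containing it, and then observing that only finitely many distinct boxes arise. The whole argument is essentially a packaging of condition (c) of Lemma~\ref{l:Crit}.

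First I would use Lemma~\ref{l:Crit}(c) directly: every $\mathbf{x}\in U$ satisfies $(\lceil\mathbf{x}-\mathbf{1}\rceil,\lfloor\mathbf{x}+\mathbf{1}\rfloor)\subseteq U$. So for $\mathbf{x}\in U$ I set the integral vectors $\mathbf{k}(\mathbf{x}):=\lceil\mathbf{x}-\mathbf{1}\rceil$ and $\mathbf{l}(\mathbf{x}):=\lfloor\mathbf{x}+\mathbf{1}\rfloor$, and put $B(\mathbf{x}):=(\mathbf{k}(\mathbf{x}),\mathbf{l}(\mathbf{x}))$; by (c) this open box is contained in $U$. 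The next step is the one genuinely requiring a computation, although an elementary one: I would verify coordinatewise that $\mathbf{x}\in B(\mathbf{x})$, i.e.\ that $\lceil x_i-1\rceil < x_i < \lfloor x_i+1\rfloor$ for every $i$. Splitting into the cases $x_i\in\Z$ and $x_i\notin\Z$, one checks $\lceil x_i-1\rceil\in\{x_i-1,\lfloor x_i\rfloor\}$ and $\lfloor x_i+1\rfloor\in\{x_i+1,\lceil x_i\rceil\}$, both of which straddle $x_i$ strictly. In particular $\mathbf{k}(\mathbf{x})<\mathbf{l}(\mathbf{x})$, so each $B(\mathbf{x})$ is a nonempty open box of the required form, and since every point of $U$ lies in its own box we obtain $U=\bigcup_{\mathbf{x}\in U}B(\mathbf{x})$.

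It remains to reduce this to a finite union, and here the hypothesis that $\R^n\setminus K$ is bounded enters. Choosing $M$ with $U\subseteq[-M,M]^n$, the integral corner vectors $\mathbf{k}(\mathbf{x})=\lceil\mathbf{x}-\mathbf{1}\rceil$ and $\mathbf{l}(\mathbf{x})=\lfloor\mathbf{x}+\mathbf{1}\rfloor$ all lie in a fixed bounded subset of $\Z^n$, hence assume only finitely many values as $\mathbf{x}$ ranges over $U$. Therefore the family $\{B(\mathbf{x}):\mathbf{x}\in U\}$ consists of finitely many distinct boxes. Indexing these by a finite set $R$ and writing $\mathbf{k}^r,\mathbf{l}^r$ for the corners of the $r$-th box yields $\R^n\setminus K=\bigcup_{r\in R}(\mathbf{k}^r,\mathbf{l}^r)$, which is exactly the claimed presentation $K=\R^n\setminus\bigcup_{r\in R}(\mathbf{k}^r,\mathbf{l}^r)$.

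I expect no real obstacle beyond the bookkeeping: the inclusion $B(\mathbf{x})\subseteq U$ is handed over by Lemma~\ref{l:Crit}(c), the finiteness is immediate from boundedness, and the only thing needing care is the floor/ceiling verification that $\mathbf{x}$ actually lies in its own box $B(\mathbf{x})$ (so that the boxes cover $U$ rather than merely being contained in it).
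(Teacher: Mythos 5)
Your proof is correct and follows the same route as the paper: take $\mathbf{k}^{\mathbf{x}}=\lceil\mathbf{x}-\mathbf{1}\rceil$, $\mathbf{l}^{\mathbf{x}}=\lfloor\mathbf{x}+\mathbf{1}\rfloor$, apply Lemma~\ref{l:Crit}(c) to get each open box inside the complement, and use boundedness to conclude only finitely many such boxes occur. The only difference is that you spell out the floor/ceiling check that $\mathbf{x}$ lies in its own box, which the paper leaves implicit.
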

\begin{proof}
	For every $\mathbf{x}\in \R^n\setminus K$ denote $\mathbf{k}^{\mathbf{x}}:=\lceil \mathbf{x}-\mathbf{1}\rceil$, $\mathbf{l}^{\mathbf{x}}:=\lfloor \mathbf{x}+\mathbf{1}\rfloor$. By \ref{l:Crit} we have $(\mathbf{k}^{\mathbf{x}},\mathbf{l}^{\mathbf{x}})\cap K=\emptyset$, and by boundedness of $\R^n\setminus K$ there is only finitely many hyperrectangles having the form $(\mathbf{k}^{\mathbf{x}},\mathbf{l}^{\mathbf{x}})$.
\end{proof}

\begin{proof}[Proof of {\ref{p:HolePrp}}]
	Choose a set $R$ and families $\mathbf{k}^r$, $\mathbf{l}^r$ from Lemma \ref{l:Cube} and put $\mu(r)=n-1$ for all $r\in R$. Furthermore, let $\mathbf{a}=\min_{r\in R}\mathbf{k}^r$, $\mathbf{b}=\max_{r\in R}\mathbf{l}^r$. For $j=1,\dots,n$ define a process with progression
	\[
		Q_j=(q_j^{a_j}[a_j]; q_j^{a_j+1}[a_j+1]; \dots; q_j^{b_j}[b_j])
	\]
	by putting
	\[
		(q_j^{i})_{\P}(r)=\begin{cases}
			1 & \text{if $k^r_j=i$}\\
			0 & \text{if $k^r_j\neq i,$}
		\end{cases}
		\qquad
		(q_j^{i})_{\V}(r)=\begin{cases}
			1 & \text{if $l^r_j=i$}\\
			0 & \text{if $l^r_j\neq i.$}
		\end{cases}
	\]
	Let $\mathcal{Q}=\{Q_j\}_{j=1}^n$. Note that $a^{\mathcal{Q}}_r(\mathbf{x})\leq n$ for all $\mathbf{x}\in\R^n$ and $a^{\mathcal{Q}}_r(\mathbf{x})=n$ if and only if $\mathbf{x}\in (\mathbf{k}^r,\mathbf{l}^r)$. Finally,
	\[
		S(\mathcal{Q})=\{\mathbf{x}\in \R^n:\; \forall_{r\in R}\; a^{\mathcal{Q}}_r(\mathbf{x})<n\}=\R^n \setminus \bigcup_{r\in R}  (\mathbf{k}^r,\mathbf{l}^r)=K.
	\]
	Since every resource acquires every resource once, then releases it, the program $\mathcal{Q}$ is valid.
\end{proof}

\section{A euclidean complex having a given path space}

In this Section we construct, for any finite simplicial complex $L$, a Euclidean complex $K_L$ such that:
\begin{itemize}
	\item{the complement of $K_L$ is contained in $[\mathbf{0},\mathbf{2}]$,}
	\item{the path space $\vec{P}(K_L)_{\mathbf{0}}^{\mathbf{2}}$ is homotopy equivalent to the geometric realization of $L$.}
\end{itemize}

The main tool we use is the inductive homotopy colimit formula for the space of directed paths on a Euclidean complex described in \cite{RZ}. Let $\Delta^{n-1}$ denote a full simplicial complex with vertices $\{1,\dots,n\}$. We will identify simplices of $\Delta^{n-1}$ (i.e.\ subsets of $\{1,\dots,n\}$) with elements $\mathbf{j}\in \{0,1\}^n$.

\begin{df}
	Let $K\subseteq \R^n$ be a Euclidean complex and let $\mathbf{k}\in \Z^n\cap K$ be a vertex of $K$. \emph{A past link} of $K$ at $\mathbf{k}$, denoted by $\lk^-_{K}(\mathbf{k})$, is the simplicial subcomplex of $\Delta^{n-1}$ defined by the condition
\[
	\mathbf{j} \in \lk^-_K(\mathbf{k})\Leftrightarrow [\mathbf{k}-\mathbf{j},\mathbf{k}]\subseteq K
\]	
for every $\mathbf{j}\in\{0,1\}^n$. 
\end{df}

Let $\mathcal{J}_K^{\mathbf{k}}$ be the inverse category of simplices of $\lk^-_K(\mathbf{k})$. Namely,
\[
	\Ob(\mathcal{J}_K^{\mathbf{k}})=\{\mathbf{j}\in\{0,1\}^n:\; [\mathbf{k}-\mathbf{j},\mathbf{k}]\subseteq K\},
\]
and for every $\mathbf{j}\geq \mathbf{j}'$ there is a single morphism $\mathbf{j}\rightarrow\mathbf{j}'$; if $\mathbf{j}<\mathbf{j}'$ there are no morphisms from $\mathbf{j}$ to $\mathbf{j}'$.  

\begin{thm}\label{t:Dec}
	Let $K\subseteq\R^n$ be a Euclidean complex and let $\mathbf{k}\in\Z^n$ be its vertex. There exists a functor 
	\[
		F_K^{\mathbf{k}}:\mathcal{J}_K^{\mathbf{k}}\rightarrow \Sp
	\]
	and a compatible family of maps (a cocone) $i_{\mathbf{j}}:F_K^{\mathbf{k}}(\mathbf{j})\rightarrow \vec{P}(K)_{\mathbf{0}}^{\mathbf{k}}$ such that the induced map
	\[
		\hocolim_{\mathbf{j}\in \mathcal{J}_K^{\mathbf{k}}}\; F_K^{\mathbf{k}}(\mathbf{j})\rightarrow \vec{P}(K)_{\mathbf{0}}^{\mathbf{k}}
	\]
	is a homotopy equivalence. Furthermore, for every $\mathbf{j}\in \mathcal{J}_K^{\mathbf{k}}$ the space $F_K^{\mathbf{k}}(\mathbf{j})$ is homotopy equivalent to $\vec{P}(K)_{\mathbf{0}}^{\mathbf{k}-\mathbf{j}}$.
\end{thm}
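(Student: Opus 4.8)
The plan is to take the functor to be, up to homeomorphism, the path space ending at the lower vertex, so that the final clause holds by construction, and then to build the comparison map out of the homotopy colimit by convex interpolation inside the cubes that the past-link condition forces to lie in $K$.

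\emph{The functor and the cocone.} For $\mathbf j\in\Ob(\mathcal J_K^{\mathbf k})$ the definition of $\lk^-_K(\mathbf k)$ gives $[\mathbf k-\mathbf j,\mathbf k]\subseteq K$, so the straight segment $\ell_{\mathbf j}$ from $\mathbf k-\mathbf j$ to $\mathbf k$ is a d-path in $K$. I would set $F_K^{\mathbf k}(\mathbf j):=\vec P(K)_{\mathbf 0}^{\mathbf k-\mathbf j}$, which makes the final assertion immediate, and define $i_{\mathbf j}$ by concatenation $\beta\mapsto\beta*\ell_{\mathbf j}$. For $\mathbf j\geq\mathbf j'$ the segment $[\mathbf k-\mathbf j,\mathbf k-\mathbf j']$ also lies in $[\mathbf k-\mathbf j,\mathbf k]\subseteq K$, and concatenation with it defines the structure map $F_K^{\mathbf k}(\mathbf j)\to F_K^{\mathbf k}(\mathbf j')$; in the Moore-path model these are strictly functorial. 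Note, however, that the triangles do not commute on the nose: appending the broken path through $\mathbf k-\mathbf j'$ produces a different path from appending the straight $\ell_{\mathbf j}$. They agree only up to the canonical d-homotopy given by the straight-line (convex) homotopy inside the cube $[\mathbf k-\mathbf j,\mathbf k]$, so $\{i_{\mathbf j}\}$ is a cocone in the homotopy-coherent sense --- which is exactly the data needed to induce a map out of the homotopy colimit.

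\emph{The comparison map.} I would build $\Phi:\hocolim_{\mathbf j}F_K^{\mathbf k}(\mathbf j)\to\vec P(K)_{\mathbf 0}^{\mathbf k}$ directly on the bar construction. On a generating cell $F_K^{\mathbf k}(\mathbf j_0)\times\Delta^p$, indexed by a chain $\mathbf j_0\geq\dots\geq\mathbf j_p$, send $(\beta,\mathbf s)$ to the d-path obtained from $\beta\in\vec P(K)_{\mathbf 0}^{\mathbf k-\mathbf j_0}$ by appending the convex interpolation, governed by the barycentric coordinates $\mathbf s$, between the broken paths running straight through the points $\mathbf k-\mathbf j_0,\dots,\mathbf k-\mathbf j_p,\mathbf k$. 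All of these broken paths lie in the convex cube $[\mathbf k-\mathbf j_0,\mathbf k]\subseteq K$, so the interpolation stays in $K$ and remains a d-path; checking compatibility with faces and degeneracies shows $\Phi$ is well defined and continuous, restricting to $i_{\mathbf j}$ at each vertex.

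\emph{The homotopy inverse and the obstacle.} To each $\alpha\in\vec P(K)_{\mathbf 0}^{\mathbf k}$ I attach its \emph{terminal direction} $\mathbf j(\alpha)\in\{0,1\}^n$, whose $i$-th coordinate is $1$ iff $\alpha_i(s)<k_i$ for all $s<1$; applying \ref{l:Crit}(b) to $\alpha(s)$ with $s$ close to $1$ shows $[\mathbf k-\mathbf j(\alpha),\mathbf k]\subseteq K$, i.e.\ $\mathbf j(\alpha)\in\Ob(\mathcal J_K^{\mathbf k})$. The inverse $\Psi$ truncates $\alpha$ at the moment it last enters its terminal cube, records the truncated path as a point of $F_K^{\mathbf k}(\mathbf j(\alpha))$, and places it at the vertex $\mathbf j(\alpha)$. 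Both composites are then pushed to the identity by the terminal-straightening d-homotopy: since the final portion of a path lies in a cube contained in $K$, sliding it linearly onto the appropriate segment never leaves $K$. The step I expect to be the main obstacle is the continuity of $\Psi$: the terminal direction jumps as some coordinate just barely reaches $k_i$ at the endpoint, so $\Psi$ cannot land in a single vertex continuously. This is resolved precisely by spreading $\Psi$ across the simplicial coordinates of the bar construction --- as the terminal cube degenerates one interpolates along the corresponding chain in $\mathcal J_K^{\mathbf k}$ --- and it is the reason a homotopy colimit, rather than an ordinary colimit, is forced upon us. Carrying out this continuity argument in full is the technical heart of the statement, and I would follow the presentation of \cite{RZ}.
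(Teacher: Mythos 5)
There is a genuine gap at the decisive step. Your functor and comparison map $\Phi$ are plausible (the convex interpolations inside the cube $[\mathbf k-\mathbf j_0,\mathbf k]$ do supply all the higher coherences, so a map out of the bar construction exists), but the proof that $\Phi$ is a homotopy equivalence is missing. The candidate inverse $\Psi$ you describe is, as you yourself note, discontinuous: the terminal direction $\mathbf j(\alpha)$ and the ``last entry time'' into the terminal cube both jump as a coordinate of $\alpha$ reaches $k_i$ arbitrarily close to $s=1$. Saying that this ``is resolved precisely by spreading $\Psi$ across the simplicial coordinates'' is not an argument --- one would need an explicit continuous assignment of barycentric coordinates interpolating along chains in $\mathcal J_K^{\mathbf k}$, compatible with the face identifications of the bar construction, and you do not produce it; you defer it to \cite{RZ}. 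Since this is exactly the content of the theorem, the proposal does not constitute a proof. A second, smaller issue: the theorem asserts the existence of an honest functor together with a strictly compatible cocone, whereas your $i_{\mathbf j}$ commute only up to homotopy; this is presumably why the statement only claims $F_K^{\mathbf k}(\mathbf j)\simeq\vec P(K)_{\mathbf 0}^{\mathbf k-\mathbf j}$ rather than equality.

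For comparison, the paper's own proof is a citation to \cite{RZ}, where the argument is structured so that no explicit homotopy inverse is ever needed: $F_K^{\mathbf k}(\mathbf j)$ is taken to be an actual subspace of $\vec P(K)_{\mathbf 0}^{\mathbf k}$ (paths whose final portion, past the hyperplane $|\mathbf x|=|\mathbf k|-1$, runs through the cube $[\mathbf k-\mathbf j,\mathbf k]$), the structure maps and the $i_{\mathbf j}$ are genuine inclusions, so the diagram is strict, and the homotopy colimit statement follows from the projection lemma for this closed cover indexed by $\lk^-_K(\mathbf k)$. The identification $F_K^{\mathbf k}(\mathbf j)\simeq\vec P(K)_{\mathbf 0}^{\mathbf k-\mathbf j}$ is then a separate deformation-retraction (straightening the final segment inside the cube). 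If you want a self-contained proof, reorganizing your argument along these lines --- cover first, then retract each piece --- avoids the discontinuity problem entirely.
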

\begin{proof}
	This is a consequence of \cite[Eq. 2.2]{RZ} and \cite[Prop. 2.3]{RZ}.
\end{proof}

We will need only the special case of this statement, when the homotopy colimit reduces to the nerve of the underlying category.
\begin{prp}\label{p:Dec}
	Let $K\subseteq \R^n$ be a Euclidean complex and let $\mathbf{k}\in\Z^n$. Assume that for every $\mathbf{j}\in \{0,1\}^n$ such that $[\mathbf{k}-\mathbf{j},\mathbf{k}]\subseteq K$ the space $\vec{P}(K)_{\mathbf{0}}^{\mathbf{k}-\mathbf{j}}$ is contractible. Then $\vec{P}(K)_{\mathbf{0}}^{\mathbf{k}}$ is homotopy equivalent to $|\lk^-_K(\mathbf{k})|$.
\end{prp}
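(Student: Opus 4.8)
The plan is to feed the contractibility hypothesis into the homotopy colimit decomposition of Theorem~\ref{t:Dec} and then to recognize the homotopy colimit of a diagram of contractible spaces as the classifying space of its indexing category --- which for $\mathcal{J}_K^{\mathbf{k}}$ is precisely the barycentric subdivision of the past link.

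First I would apply Theorem~\ref{t:Dec} to obtain the functor $F_K^{\mathbf{k}}\colon\mathcal{J}_K^{\mathbf{k}}\to\Sp$, the homotopy equivalence $\hocolim_{\mathbf{j}\in\mathcal{J}_K^{\mathbf{k}}} F_K^{\mathbf{k}}(\mathbf{j})\to\vec{P}(K)_{\mathbf{0}}^{\mathbf{k}}$, and the objectwise equivalences $F_K^{\mathbf{k}}(\mathbf{j})\simeq\vec{P}(K)_{\mathbf{0}}^{\mathbf{k}-\mathbf{j}}$. Every object $\mathbf{j}$ of $\mathcal{J}_K^{\mathbf{k}}$ satisfies $[\mathbf{k}-\mathbf{j},\mathbf{k}]\subseteq K$, so the hypothesis makes each space $\vec{P}(K)_{\mathbf{0}}^{\mathbf{k}-\mathbf{j}}$, and therefore each value $F_K^{\mathbf{k}}(\mathbf{j})$, contractible.

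Next I would invoke homotopy invariance of the homotopy colimit. The unique natural transformation $F_K^{\mathbf{k}}\Rightarrow\underline{\ast}$ to the constant diagram at a point is, by the previous step, an objectwise homotopy equivalence; hence it induces a homotopy equivalence $\hocolim_{\mathbf{j}} F_K^{\mathbf{k}}(\mathbf{j})\simeq\hocolim_{\mathbf{j}}\underline{\ast}=|N\mathcal{J}_K^{\mathbf{k}}|$, the right-hand side being the nerve (classifying space) of the indexing category. It then remains to identify $|N\mathcal{J}_K^{\mathbf{k}}|$ with $|\lk^-_K(\mathbf{k})|$: since $\mathcal{J}_K^{\mathbf{k}}$ is the poset of simplices of $\lk^-_K(\mathbf{k})$ ordered by reverse inclusion, its nerve is the order complex of that face poset, i.e.\ the barycentric subdivision $\operatorname{sd}\lk^-_K(\mathbf{k})$, whose realization is canonically homeomorphic to $|\lk^-_K(\mathbf{k})|$. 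Concatenating the three equivalences gives $\vec{P}(K)_{\mathbf{0}}^{\mathbf{k}}\simeq|\lk^-_K(\mathbf{k})|$.

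I expect the technical heart to be the homotopy-invariance step: the claim that a homotopy colimit of an objectwise contractible diagram collapses onto the classifying space of the index category. I would justify this either by citing the standard property that an objectwise weak equivalence of diagrams induces a weak equivalence of homotopy colimits, or, more concretely, by passing to the Bousfield--Kan simplicial replacement, whose space of $q$-simplices is $\coprod_{\mathbf{j}_0\to\cdots\to\mathbf{j}_q} F_K^{\mathbf{k}}(\mathbf{j}_0)$; the projection onto the simplicial replacement of $\underline{\ast}$, namely $\coprod_{\mathbf{j}_0\to\cdots\to\mathbf{j}_q}\ast=(N\mathcal{J}_K^{\mathbf{k}})_q$, is a degreewise homotopy equivalence and hence an equivalence after realization. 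The only point requiring care is to promote this to a genuine homotopy equivalence rather than a mere weak one, which is harmless here since all the spaces involved have CW homotopy type.
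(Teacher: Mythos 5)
Your argument is correct and follows essentially the same route as the paper: apply Theorem~\ref{t:Dec}, collapse the objectwise-contractible homotopy colimit onto the nerve of $\mathcal{J}_K^{\mathbf{k}}$, and identify that nerve with $|\lk^-_K(\mathbf{k})|$. The only (immaterial) difference is the last step, where the paper applies the Nerve Lemma to the cover of $\lk^-_K(\mathbf{k})$ by stars of simplices, while you observe directly that the nerve of the face poset is the barycentric subdivision; both are standard.
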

\begin{proof}
	By \ref{t:Dec} and the Nerve Lemma \cite[Cor.\ 4G.3]{H} we obtain that $\vec{P}(K)_{\mathbf{0}}^{\mathbf{k}}$ is homotopy equivalent to the nerve of the category $\mathcal{J}_K^{\mathbf{k}}$. By applying the Nerve Lemma again to the cover of $\lk^-_K(\mathbf{k})$ by the stars of simplices we obtain a homotopy equivalence $|\lk^-_K(\mathbf{k})|\simeq |N\mathcal{J}_K^{\mathbf{k}}|$.
\end{proof}

\begin{df}
	\emph{A future cone} of a simplicial complex $M\subseteq \Delta^{n-1}$ with apex $\mathbf{k}\in\Z^n$ is a Euclidean complex
	\[
		C^+(\mathbf{k},M)=\bigcup_{\mathbf{j}\in M}[\mathbf{k},\mathbf{k}+\mathbf{j}].
	\]
	Similarly, \emph{a past cone} is
	\[
		C^-(\mathbf{k},M)=\bigcup_{\mathbf{j}\in M}[\mathbf{k}-\mathbf{j},\mathbf{k}].
	\]	
\end{df}

Fix a simplicial complex $L$. Since $L$ can be embedded into a simplex, we will assume that $L\subseteq \Delta^{n-1}$. Define a Euclidean complex $C_L\subseteq [\mathbf{0},\mathbf{1}]\subseteq\R^n$ by
\begin{equation}
	C_L= C^+(\mathbf{0},\partial\Delta^{n-1})\cup C^-(\mathbf{1},L).
\end{equation}

\begin{prp}\label{p:PCL}
	The space $\vec{P}(C_L)_{\mathbf{0}}^{\mathbf{1}}$ is homotopy equivalent to $|L|$.
\end{prp}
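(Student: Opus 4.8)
The plan is to invoke Proposition \ref{p:Dec} with $K = C_L$ and apex $\mathbf{k} = \mathbf{1}$. This reduces \ref{p:PCL} to two tasks: identifying the past link $\lk^-_{C_L}(\mathbf{1})$ with $L$, and verifying that the auxiliary path space $\vec{P}(C_L)_{\mathbf{0}}^{\mathbf{1}-\mathbf{j}}$ is contractible for every nonempty simplex $\mathbf{j}$ of that past link. Granting these, \ref{p:Dec} gives $\vec{P}(C_L)_{\mathbf{0}}^{\mathbf{1}} \simeq |\lk^-_{C_L}(\mathbf{1})| = |L|$, as desired.

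To compute the past link, recall that $\mathbf{j} \in \lk^-_{C_L}(\mathbf{1})$ means $[\mathbf{1}-\mathbf{j},\mathbf{1}] \subseteq C_L$. The inclusion $L \subseteq \lk^-_{C_L}(\mathbf{1})$ is immediate, since for $\mathbf{j} \in L$ the cube $[\mathbf{1}-\mathbf{j},\mathbf{1}]$ is by definition one of the cubes building $C^-(\mathbf{1},L) \subseteq C_L$. For the opposite inclusion I would exploit the key geometric feature of $C_L$: any point lying in $C^+(\mathbf{0},\partial\Delta^{n-1})$ has at least one vanishing coordinate, because it lies in some $[\mathbf{0},\mathbf{j}']$ with $\mathbf{j}' \neq \mathbf{1}$. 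Thus, choosing a point $\mathbf{y}$ in the relative interior of $[\mathbf{1}-\mathbf{j},\mathbf{1}]$ (all coordinates strictly positive) forces $\mathbf{y} \in C^-(\mathbf{1},L)$, so $\mathbf{y} \in [\mathbf{1}-\mathbf{j}',\mathbf{1}]$ for some $\mathbf{j}' \in L$; comparing coordinates on $\supp(\mathbf{j})$ yields $\mathbf{j} \leq \mathbf{j}'$, and closure of $L$ under faces gives $\mathbf{j} \in L$.

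For the contractibility, fix a nonempty $\mathbf{j} \in L$. The endpoint $\mathbf{1}-\mathbf{j}$ has a zero coordinate at each $i \in \supp(\mathbf{j})$, and since d-paths are coordinatewise non-decreasing, any element of $\vec{P}(C_L)_{\mathbf{0}}^{\mathbf{1}-\mathbf{j}}$ is squeezed into the box $[\mathbf{0},\mathbf{1}-\mathbf{j}]$. Every point of this box has a vanishing coordinate, hence lies in $C^+(\mathbf{0},\partial\Delta^{n-1}) \subseteq C_L$, so the box is contained in $C_L$ and $\vec{P}(C_L)_{\mathbf{0}}^{\mathbf{1}-\mathbf{j}} = \vec{P}([\mathbf{0},\mathbf{1}-\mathbf{j}])_{\mathbf{0}}^{\mathbf{1}-\mathbf{j}}$. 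The latter is the space of monotone paths between opposite corners of a coordinate box, which is convex (non-negative convex combinations of monotone paths sharing endpoints remain monotone with the same endpoints) and therefore contractible.

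I expect the main obstacle to be the reverse inclusion $\lk^-_{C_L}(\mathbf{1}) \subseteq L$: a priori a cube $[\mathbf{1}-\mathbf{j},\mathbf{1}]$ with $\mathbf{j} \notin L$ could be covered \emph{jointly} by the two cones making up $C_L$, and one must produce a single witness ruling this out. Choosing the strictly interior point $\mathbf{y}$, whose positivity excludes it from the future cone and pins it inside the past cone, is the device that resolves this; the remaining coordinate comparisons and the contractibility of box path spaces are routine. Once both hypotheses of \ref{p:Dec} are in place, the proposition delivers the claimed homotopy equivalence.
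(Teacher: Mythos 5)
Your proposal is correct and follows essentially the same route as the paper: apply Proposition \ref{p:Dec} at the apex $\mathbf{1}$, identify $\lk^-_{C_L}(\mathbf{1})$ with $L$, and observe that each auxiliary space $\vec{P}(C_L)_{\mathbf{0}}^{\mathbf{1}-\mathbf{j}}$ coincides with the contractible space of monotone paths in the box $[\mathbf{0},\mathbf{1}-\mathbf{j}]$. The paper merely asserts these two facts, whereas you supply the (correct) verifications, in particular the interior-point argument ruling out $\mathbf{j}\notin L$ from the past link.
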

\begin{proof}
	For every $\mathbf{1}>\mathbf{j}\in\{0,1\}^n$ the space $	\vec{P}(C_L)_{\mathbf{0}}^{\mathbf{j}}\cong \vec{P}([\mathbf{0},\mathbf{j}])_{\mathbf{0}}^{\mathbf{j}}$ is contractible. The category $\mathcal{J}_{C_L}^{\mathbf{1}}$ is the inverse category of simplices of $L$. Hence by \ref{p:Dec} we have $\vec{P}(C_L)_{\mathbf{0}}^{\mathbf{1}}\simeq |L|$.
\end{proof}

Let
\begin{equation}\label{e:KL}
	K_L:=C_L\cup [\mathbf{1},\mathbf{2}] \cup (\R^n\setminus (\mathbf{0},\mathbf{2})  )
\end{equation}

\begin{prp}\label{p:PKL}
	$\vec{P}(K_L)_{\mathbf{0}}^{\mathbf{2}}\simeq |L|\sqcup S^{n-2}$.
\end{prp}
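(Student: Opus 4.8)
The plan is to apply Theorem~\ref{t:Dec} at $\mathbf{k}=\mathbf{2}$ and then split the resulting diagram into two pieces. First I would compute the past link: since $[\mathbf{1},\mathbf{2}]\subseteq K_L$ and every cube $[\mathbf{2}-\mathbf{j},\mathbf{2}]$ with $\mathbf{j}\in\{0,1\}^n$ is a face of $[\mathbf{1},\mathbf{2}]$, all of them lie in $K_L$; hence $\lk^-_{K_L}(\mathbf{2})=\Delta^{n-1}$ and $\mathcal{J}_{K_L}^{\mathbf{2}}$ is the poset of nonempty subsets $S\subseteq\{1,\dots,n\}$. Theorem~\ref{t:Dec} then gives $\vec{P}(K_L)_{\mathbf{0}}^{\mathbf{2}}\simeq\hocolim_{\mathbf{j}}F(\mathbf{j})$ with $F(\mathbf{j})\simeq\vec{P}(K_L)_{\mathbf{0}}^{\mathbf{2}-\mathbf{j}}$, where for $\mathbf{j}=\chi_S$ the point $\mathbf{p}_S:=\mathbf{2}-\mathbf{j}$ has coordinate $1$ on $S$ and $2$ on its complement. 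Note that Proposition~\ref{p:Dec} cannot be applied directly, because these factors are \emph{not} contractible in general.

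\textbf{A natural clopen splitting of each factor.} The heart of the argument is a decomposition $\vec{P}(K_L)_{\mathbf{0}}^{\mathbf{p}_S}=A_S\sqcup B_S$ according to the \emph{entry point} of a path into the cube $[\mathbf{1},\mathbf{2}]$, i.e.\ the first point at which all coordinates are $\geq 1$. The key geometric observation is that every point of the open box $(\mathbf{0},\mathbf{2})$ having one coordinate $<1$ and another $>1$ lies in the complement $\R^n\setminus K_L$ (it is outside $C_L\cup[\mathbf{1},\mathbf{2}]$), so it is forbidden for d-paths. Consequently the entry point is forced to be either exactly $\mathbf{1}$ or a point with some coordinate already equal to $2$; nothing in between can occur. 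Let $A_S$ (\emph{central} paths) be those entering at $\mathbf{1}$ and $B_S$ (\emph{peripheral} paths) the rest. The gap between the two families of entry points makes $A_S$ and $B_S$ open and closed, and since the structure maps $F(\mathbf{j})\to F(\mathbf{j}')$ only prolong a path beyond $\mathbf{p}_S$ (in the cube $[\mathbf{1},\mathbf{2}]$) they leave the entry point untouched, so the splitting is natural in $\mathbf{j}$.

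\textbf{Computing the two parts.} A path in $A_S$ is a concatenation of a d-path $\mathbf{0}\to\mathbf{1}$ lying in $K_L\cap[\mathbf{0},\mathbf{1}]=C_L$ with a d-path $\mathbf{1}\to\mathbf{p}_S$ inside $[\mathbf{1},\mathbf{2}]$; as the space of the latter (directed paths in a box between comparable corners) is contractible, $A_S\simeq\vec{P}(C_L)_{\mathbf{0}}^{\mathbf{1}}\simeq|L|$ by Proposition~\ref{p:PCL}, and the structure maps restrict to homotopy equivalences. For $B_S$ I would show that peripheral paths deformation retract onto directed paths $\mathbf{0}\to\mathbf{p}_S$ avoiding the whole open box, i.e.\ onto $\vec{P}(K')_{\mathbf{0}}^{\mathbf{p}_S}$ with $K'=\R^n\setminus(\mathbf{0},\mathbf{2})$. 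When $S\neq\{1,\dots,n\}$ at least one coordinate of $\mathbf{p}_S$ equals $2$, and a straightening deformation that uses this ``free'' boundary coordinate shows this space is contractible; when $S=\{1,\dots,n\}$ the target is $\mathbf{1}$, no peripheral entry is possible, so $B_{\{1,\dots,n\}}=\emptyset$.

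\textbf{Assembly and the main obstacle.} Since $F=A\sqcup B$ is a splitting of diagrams and homotopy colimits commute with disjoint unions, $\vec{P}(K_L)_{\mathbf{0}}^{\mathbf{2}}\simeq\hocolim A\sqcup\hocolim B$. The diagram $A$ has values $\simeq|L|$ and equivalences as structure maps over $\mathcal{J}_{K_L}^{\mathbf{2}}$, whose nerve is the subdivision of $\Delta^{n-1}$ and hence contractible, so $\hocolim A\simeq|L|$. The diagram $B$ is the constant point-diagram over the subposet of proper nonempty subsets, that is the face poset of $\partial\Delta^{n-1}$, so by the Nerve Lemma $\hocolim B\simeq|\partial\Delta^{n-1}|=S^{n-2}$; this gives $\vec{P}(K_L)_{\mathbf{0}}^{\mathbf{2}}\simeq|L|\sqcup S^{n-2}$. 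I expect the main obstacle to be Step two and three: rigorously establishing that the off-diagonal region is entirely a hole so that entry into $[\mathbf{1},\mathbf{2}]$ occurs only at $\mathbf{1}$ or on the far boundary (hence the clopen splitting), and identifying the peripheral factor with $\vec{P}(K')_{\mathbf{0}}^{\mathbf{p}_S}$ together with its contractibility. Once these are in place, the central computation is immediate from Proposition~\ref{p:PCL} and the final assembly is formal.
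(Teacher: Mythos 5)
Your proof is built on a genuinely different decomposition from the paper's. The paper does not invoke Theorem~\ref{t:Dec} at $\mathbf{2}$ at all: it observes that every d-path from $\mathbf{0}$ to $\mathbf{2}$ crosses the separating hyperplane $\{\sum x_i=n\}$ in a single point, and that the intersection of this hyperplane with $K_L$ has exactly two components, the point $\mathbf{1}$ and a piece of $\R^n\setminus(\mathbf{0},\mathbf{2})$. This yields your central/peripheral dichotomy in one stroke, globally and visibly clopen, giving $\vec{P}(K_L)_{\mathbf{0}}^{\mathbf{2}}=\vec{P}(C_L\cup[\mathbf{1},\mathbf{2}])_{\mathbf{0}}^{\mathbf{2}}\sqcup\vec{P}(\partial[\mathbf{0},\mathbf{2}])_{\mathbf{0}}^{\mathbf{2}}$; the first factor is a product $\vec{P}(C_L)_{\mathbf{0}}^{\mathbf{1}}\times\vec{P}([\mathbf{1},\mathbf{2}])_{\mathbf{1}}^{\mathbf{2}}\simeq|L|$ by Proposition~\ref{p:PCL}, and the second is $S^{n-2}$ by citing \cite[2.6.1]{RZ}. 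Your route instead pushes the dichotomy through the homotopy colimit of Theorem~\ref{t:Dec} and reassembles the two subdiagrams; the underlying geometric observation (the off-diagonal region of $(\mathbf{0},\mathbf{2})$ is entirely a hole, so entry into $[\mathbf{1},\mathbf{2}]$ happens at $\mathbf{1}$ or on the far boundary) is correct and is in effect the same fact the paper's hyperplane argument exploits. What the paper's version buys is that the splitting is performed once, before any machinery, so no naturality questions arise and the $S^{n-2}$ summand comes from a single citation rather than from assembling contractible peripheral pieces over $\partial\Delta^{n-1}$.

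Two points in your write-up would need real work and are worth flagging. First, Theorem~\ref{t:Dec} only tells you that $F_{K}^{\mathbf{k}}(\mathbf{j})$ is \emph{homotopy equivalent} to $\vec{P}(K)_{\mathbf{0}}^{\mathbf{k}-\mathbf{j}}$; to split the diagram $F$ itself into $A\sqcup B$ and to know the structure maps respect the splitting and restrict to equivalences on the $A$-part, you must open up the actual definition of $F_K^{\mathbf{k}}$ and its maps in \cite{RZ}, which this paper does not reproduce --- so this step cannot be completed from the statements quoted here. Second, the contractibility of $\vec{P}(\R^n\setminus(\mathbf{0},\mathbf{2}))_{\mathbf{0}}^{\mathbf{p}_S}$ for $S$ a proper nonempty subset is asserted via a ``straightening deformation'' but not proved; it is true, but it is of the same order of difficulty as the $S^{n-2}$ computation you are trying to reproduce, and is better obtained by citation (as the paper does for the top vertex) or by an induction using Proposition~\ref{p:Dec}. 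With those two items supplied, your argument goes through and yields the same answer.
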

\begin{proof}
	Let $V$ denote the hyperplane $\{\mathbf{x}\in\R^n:\; x_1+\dots+x_n=1\}$. The section $V\cap K_L$ consists of two disjoint components: a single point $\mathbf{1}$ and $V\cap (\R^n \setminus (\mathbf{0},\mathbf{2}))$. Every path $\alpha\in \vec{P}(K_L)_{\mathbf{0}}^{\mathbf{2}}$ crosses $V\cap K_L$ at a single point; if this point is $\mathbf{1}$, then $\alpha$ is contained $C_L\cup [\mathbf{1},\mathbf{2}]$. If $\alpha$ crosses $V\cap (\R^n \setminus (\mathbf{0},\mathbf{2}))$, then it is contained in $\partial [\mathbf{0},\mathbf{2}]$. Hence
	 \[
		\vec{P}(C_L\cup [\mathbf{1},\mathbf{2}]\cup \partial[\mathbf{0},\mathbf{2}])_{\mathbf{0}}^{\mathbf{2}}=
		\vec{P}(C_L\cup [\mathbf{1},\mathbf{2}])_{\mathbf{0}}^{\mathbf{2}} \sqcup \vec{P}([\partial[\mathbf{0},\mathbf{2}])_{\mathbf{0}}^{\mathbf{2}}
	 \]
	The space $\vec{P}([\partial[\mathbf{0},\mathbf{2}])_{\mathbf{0}}^{\mathbf{2}}$ is homotopy equivalent to $S^{n-2}$ (by \cite[2.6.1]{RZ}). Furthermore,
	\[
		\vec{P}(C_L\cup [\mathbf{1},\mathbf{2}])_{\mathbf{0}}^{\mathbf{2}}\simeq 
		\vec{P}(C_L)_{\mathbf{0}}^{\mathbf{1}}\times \vec{P}([\mathbf{1},\mathbf{2}])_{\mathbf{1}}^{\mathbf{2}}
		\buildrel{\ref{p:PCL}}\over {\simeq} |L|\times \{*\}\simeq |L|.\qedhere
	\]
\end{proof}

As a consequence, we obtain the following
\begin{thm}\label{t:MainThm}
	Let $X$ be a topological space which is homotopy equivalent to the geometric realization of a finite simplicial complex $L$. Then there exists an elementary valid PV-program $\mathcal{Q}$ such that $E(\mathcal{Q})$ has a component homotopy equivalent to $X$.
\end{thm}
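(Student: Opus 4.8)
The plan is to combine the three facts already established: that $K_L$ has directed path space $\vec{P}(K_L)_{\mathbf 0}^{\mathbf 2}\simeq|L|\sqcup S^{n-2}$ (Proposition~\ref{p:PKL}), that a Euclidean complex with bounded complement is a state space (Proposition~\ref{p:HolePrp}), and that for a valid program the execution space is the ``middle'' path space (Proposition~\ref{p:ExVal}). Since a connected component is connected, $X$ is connected, so I may fix a \emph{connected} finite simplicial complex $L$ with $|L|\simeq X$, embed $L\subseteq\Delta^{n-1}$, and form the Euclidean complex $K_L$ of \eqref{e:KL}. Its complement satisfies $\R^n\setminus K_L\subseteq(\mathbf 0,\mathbf 2)$ and is therefore bounded, while $\mathbf 0,\mathbf 2\in\R^n\setminus(\mathbf 0,\mathbf 2)\subseteq K_L$.

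First I would apply Proposition~\ref{p:HolePrp} to obtain an elementary valid program $\mathcal Q=\{Q_1,\dots,Q_n\}$ with an integral progression and $S(\mathcal Q)=K_L$. (Elementariness can be guaranteed throughout by Propositions~\ref{p:Reductions} and~\ref{p:ExEq}, which replace any valid program by an execution-equivalent elementary one without changing the execution space up to homotopy.) The holes $(\mathbf k^r,\mathbf l^r)$ removed in that construction all lie in $(\mathbf 0,\mathbf 2)$, so $\mathbf k^r\ge\mathbf 0$ and $\mathbf l^r\le\mathbf 2$; since the extremal operation times are $\mathbf t^\bot=\min_r\mathbf k^r$ and $\mathbf t^\top=\max_r\mathbf l^r$, I record $\mathbf 0\le\mathbf t^\bot\le\mathbf t^\top\le\mathbf 2$.

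Next I would identify $E(\mathcal Q)$. Validity and Proposition~\ref{p:ExVal} give $E(\mathcal Q)\simeq\vec{P}(K_L)_{\mathbf t^\bot}^{\mathbf t^\top}$, and it remains to move the basepoints out to $\mathbf 0$ and $\mathbf 2$. For this I would rerun the deformation of Proposition~\ref{p:ExVal} verbatim: the coordinatewise clamp $\mathbf p$ onto $[\mathbf t^\bot,\mathbf t^\top]$ preserves every potential function (these are constant, in fact zero, outside the box) and the straight-line homotopies stay inside $S(\mathcal Q)=K_L$, so exactly the same maps $F,G$ and homotopies—now prepending the directed segment $[\mathbf 0,\mathbf t^\bot]$ and appending $[\mathbf t^\top,\mathbf 2]$—prove $\vec{P}(K_L)_{\mathbf 0}^{\mathbf 2}\simeq\vec{P}(K_L)_{\mathbf t^\bot}^{\mathbf t^\top}$. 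Only the non-strict inequalities $\mathbf 0\le\mathbf t^\bot$ and $\mathbf t^\top\le\mathbf 2$ are used, so coordinates with $t^\bot_j=0$ or $t^\top_j=2$ (which do occur) cause no trouble. Combined with Proposition~\ref{p:PKL} this yields $E(\mathcal Q)\simeq|L|\sqcup S^{n-2}$.

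Finally I would extract the desired component. The equivalence of Proposition~\ref{p:PKL} comes from a genuine topological splitting $\vec{P}(K_L)_{\mathbf 0}^{\mathbf 2}=\vec{P}(C_L\cup[\mathbf 1,\mathbf 2])_{\mathbf 0}^{\mathbf 2}\sqcup\vec{P}(\partial[\mathbf 0,\mathbf 2])_{\mathbf 0}^{\mathbf 2}$ whose summands are homotopy equivalent to $|L|$ and $S^{n-2}$; because $L$ is connected the first summand is path-connected, hence a single path-component. Since a homotopy equivalence induces a bijection on $\pi_0$ and restricts to a homotopy equivalence between corresponding components, $E(\mathcal Q)$ has a connected component homotopy equivalent to $|L|\simeq X$. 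I expect the main obstacle to be exactly this endpoint bookkeeping—transporting the computation made at the distinguished vertices $\mathbf 0,\mathbf 2$ to the intrinsic execution endpoints $\mathbf t^\bot,\mathbf t^\top$—together with the routine but necessary check that the abstract homotopy equivalence is compatible with the component carrying $|L|$.
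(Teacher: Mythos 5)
Your proof is correct and follows essentially the same route as the paper's: apply Proposition~\ref{p:HolePrp} to realize $K_L$ as a state space, pass to an execution-equivalent elementary program via Propositions~\ref{p:Reductions} and~\ref{p:ExEq}, and combine Propositions~\ref{p:ExVal} and~\ref{p:PKL} to conclude $E(\mathcal{Q})\simeq |L|\sqcup S^{n-2}$. The endpoint bookkeeping (relating $\mathbf{t}^\bot,\mathbf{t}^\top$ to $\mathbf{0},\mathbf{2}$) and the extraction of the $|L|$-component, which you spell out carefully, are elided in the paper, but your treatment of them is sound.
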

\begin{proof}
	By \ref{p:HolePrp} there exists a valid PV-program $\tilde{\mathcal{Q}}$ such that $S(\tilde{\mathcal{Q}})=K_L$. Let $\mathcal{Q}$ be an elementary PV-program which is execution equivalent to $\tilde{\mathcal{Q}}$. Since $\mathcal{Q}$ is valid we have
	\[
		E(\mathcal{Q})\buildrel{\ref{p:ExEq}}\over\simeq E(\tilde{\mathcal{Q}})\buildrel{\ref{p:ExVal}}\over\simeq\vec{P}(S(\tilde{\mathcal{Q}}))_{\mathbf{0}}^{\mathbf{2}}=\vec{P}(K_L)_{\mathbf{0}}^{\mathbf{2}}
		\buildrel{\ref{p:PKL}}\over\simeq |L|\sqcup S^{n-2}.\qedhere
	\]
\end{proof}

\section{An explicit construction}

In this Section we describe an explicit construction of a PV-program such that its execution space contains a connected component homotopy equivalent to a given space. Let $L$ be a finite simplicial complex with vertices $\{1,\dots,n\}$. Let $\{A_r\}_{r\in R'}$ be a family of subsets of $\{1,\dots,n\}$ such that $S$ is a simplex of $L$ if and only if  $A_r\not\subseteq S$ for all $r\in R'$. For every pair $i\neq j\in\{1,\dots,n\}$ define points $\mathbf{c}^{i,j},\mathbf{d}^{i,j}\in \Z^n$ by
\[
	c^{i,j}_m=\begin{cases}0 & \text{for $i\neq m$} \\ 1 & \text{for $i=m$}\end{cases}
	\;\;\;\;\;\;
	d^{i,j}_m=\begin{cases}1 & \text{for $j= m$} \\ 2 & \text{for $j\neq m$}\end{cases}	
\]
and for every $r\in R'$ define $\mathbf{k}^r\in\Z^n$ by
\[
	k^{r}_m=\begin{cases}1 & \text{for $m\in A_r$} \\ 2 & \text{for $m\not\in A_r.$}\end{cases}	
\]
\begin{prp} $K_L=\R^n\setminus U_L$, where
	\[
		U_L=  \left( \bigcup_{i\neq j} \left(\mathbf{c}^{i,j},\mathbf{d}^{i,j}\right) \cup \bigcup_{r\in R'} \left(\mathbf{0},\mathbf{k}^{r}\right)  \right).
	\]	
\end{prp}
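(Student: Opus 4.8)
The plan is to establish the set equality directly, by deciding, for an arbitrary $\mathbf{x}\in\R^n$, whether $\mathbf{x}$ lies in $K_L$ and whether it lies in $U_L$, and checking that the two answers always agree. I begin with two reductions. Each open box appearing in $U_L$ is contained in $(\mathbf{0},\mathbf{2})$, because $\mathbf{0}\le\mathbf{c}^{i,j}$, $\mathbf{d}^{i,j}\le\mathbf{2}$ and $\mathbf{k}^r\le\mathbf{2}$; on the other hand $\R^n\setminus(\mathbf{0},\mathbf{2})\subseteq K_L$ by \eqref{e:KL}. Hence $U_L\subseteq(\mathbf{0},\mathbf{2})$ and $\R^n\setminus K_L\subseteq(\mathbf{0},\mathbf{2})$, so it suffices to compare the two sets inside the open cube $(\mathbf{0},\mathbf{2})$. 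Moreover, for $\mathbf{x}\in(\mathbf{0},\mathbf{2})$ every coordinate is positive, so $\mathbf{x}$ can never lie in $C^+(\mathbf{0},\partial\Delta^{n-1})$ (which requires a vanishing coordinate); therefore within $(\mathbf{0},\mathbf{2})$ one has $\mathbf{x}\in K_L$ if and only if $\mathbf{x}\in C^-(\mathbf{1},L)$ or $\mathbf{x}\in[\mathbf{1},\mathbf{2}]$.

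The key preparatory computation is a combinatorial description of the past cone. Unwinding the definition and using that $L$ is downward closed, I would show that for $\mathbf{x}\in[\mathbf{0},\mathbf{1}]$,
\[
	\mathbf{x}\in C^-(\mathbf{1},L)\iff \{m:\,x_m<1\}\in L,
\]
since $\mathbf{x}\in[\mathbf{1}-\mathbf{j},\mathbf{1}]$ holds precisely when $\{m:\,x_m<1\}\subseteq\supp\mathbf{j}$. I would also record the explicit shapes of the boxes: $(\mathbf{c}^{i,j},\mathbf{d}^{i,j})$ is the set of $\mathbf{x}$ with $1<x_i<2$, $0<x_j<1$ and $0<x_m<2$ for $m\neq i,j$, while $(\mathbf{0},\mathbf{k}^r)$ is the set of $\mathbf{x}$ with $0<x_m<1$ for $m\in A_r$ and $0<x_m<2$ for $m\notin A_r$.

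With these facts I would split $(\mathbf{0},\mathbf{2})$ into three regions according to the position of $\mathbf{x}$ relative to the hyperplanes $x_m=1$. If all $x_m\le1$ (so $\mathbf{x}\in[\mathbf{0},\mathbf{1}]$), then $\mathbf{x}\in K_L\iff\{m:\,x_m<1\}\in L$; equivalently $\mathbf{x}\notin K_L$ iff $\{m:\,x_m<1\}$ is a non-face of $L$, which by the defining property of the family $\{A_r\}$ means $A_r\subseteq\{m:\,x_m<1\}$ for some $r$. Since $x_m<2$ automatically holds here, this last condition is exactly $\mathbf{x}\in(\mathbf{0},\mathbf{k}^r)$, so $\mathbf{x}\notin K_L\iff\mathbf{x}\in U_L$. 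If all $x_m\ge1$ (so $\mathbf{x}\in[\mathbf{1},\mathbf{2}]\subseteq K_L$), then $\mathbf{x}$ lies in no box of $U_L$, since $(\mathbf{c}^{i,j},\mathbf{d}^{i,j})$ forces $x_j<1$ and $(\mathbf{0},\mathbf{k}^r)$ forces $x_m<1$ for $m\in A_r$ (a nonempty set, as $\emptyset\in L$ makes every $A_r\neq\emptyset$); thus both sides exclude $\mathbf{x}$. Finally, if some $x_i>1$ and some $x_j<1$, then $\mathbf{x}\notin[\mathbf{1},\mathbf{2}]$ and $\mathbf{x}\notin C^-(\mathbf{1},L)\subseteq[\mathbf{0},\mathbf{1}]$, so $\mathbf{x}\notin K_L$, while the box $(\mathbf{c}^{i,j},\mathbf{d}^{i,j})$ contains $\mathbf{x}$, so $\mathbf{x}\in U_L$. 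Assembling the three regions yields $U_L=\R^n\setminus K_L$, i.e.\ $K_L=\R^n\setminus U_L$.

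The two purely geometric regions (all coordinates on one side of $1$, and the mixed case) are routine. The step carrying the real content is the first region, where the hypothesis on $\{A_r\}$ — that the non-faces of $L$ are exactly the sets containing some $A_r$ — must be matched precisely with membership in the boxes $(\mathbf{0},\mathbf{k}^r)$. I expect the only delicate bookkeeping to be the translation between the cone $C^-(\mathbf{1},L)$ and the combinatorial condition $\{m:\,x_m<1\}\in L$, together with keeping careful track of strict versus non-strict inequalities at the hyperplanes $x_m=1$.
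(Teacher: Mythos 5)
Your proposal is correct and follows essentially the same route as the paper: a case analysis on the position of $\mathbf{x}$ relative to the hyperplanes $x_m=1$, with the combinatorial core being the equivalence between $\{m:\,x_m<1\}$ being a non-face of $L$ and membership in some box $(\mathbf{0},\mathbf{k}^r)$, and the mixed-coordinate case absorbed by the boxes $(\mathbf{c}^{i,j},\mathbf{d}^{i,j})$. The only difference is cosmetic bookkeeping in how the cases are grouped.
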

\begin{proof}
	We need to prove that every $\mathbf{x}\in\R^n$ is contained in exactly one of the sets $K_L$, $U_L$. Consider the following cases:
	\begin{itemize}
		\item{$\mathbf{x}\not\in(\mathbf{0},\mathbf{2})$. Then $\mathbf{x}\in \R^n\setminus (\mathbf{0},\mathbf{2})\subseteq K_L$ and $\mathbf{x}\not\in U_L$.}
		\item{$\mathbf{x}\in [\mathbf{1},\mathbf{2})$. Then $\mathbf{x}\in K_L$ and $\mathbf{x}\not\in U_L$, since all the points $\mathbf{d}^{i,j}$ and $\mathbf{k}^r$ have at least one coordinate equal $1$ (the family $\{A_r\}_{r\in R'}$ cannot contain an empty set).}
		\item{$\mathbf{x}\in (\mathbf{0},\mathbf{1}]$. Consider the set 
		\[
			J_{\mathbf{x}}=\{m\in\{1,\dots,n\}:\; x_m<1\}.
		\]
		Obviously $K_L\cap (\mathbf{0},\mathbf{1}]=C^-(\mathbf{1},L)$ and $\left(\mathbf{c}^{i,j},\mathbf{d}^{i,j}\right)\cap (\mathbf{0},\mathbf{1}]=\emptyset$ for all $i,j$. Then
		\[
			\mathbf{x}\in K_L\Leftrightarrow \mathbf{x}\in C^-(\mathbf{1},L)\Leftrightarrow J_{\mathbf{x}}\in L
		\]
		\[	
			\mathbf{x}\in U_L\Leftrightarrow \exists_{r\in R'}\; \mathbf{x}\in (\mathbf{0},\mathbf{k}^r)\Leftrightarrow A_r\subseteq J_{\mathbf{x}}.
		\]
		By assumptions, exactly one of these conditions is satisfied.
		}
		\item{$\mathbf{x}\in (\mathbf{0},\mathbf{2})\setminus \big((\mathbf{0},\mathbf{1}]\cup [\mathbf{1},\mathbf{2})\big)$. Then $\mathbf{x}\not\in K_L$. There exists $i,j\in\{1,\dots,n\}$ such that $x_i>1$ and $x_j<1$ and then $\mathbf{x}\in (\mathbf{c}^{i,j},\mathbf{d}^{i,j})$.\qedhere}
	\end{itemize}
\end{proof}

Define a resource set $R=R'\cup \{g_{i,j}\}_{i\neq j}$, $i,j\in\{1,\dots,n\}$.
Now let $\mathcal{Q}(L)=\{Q(L)_1,\dots,Q(L)_n\}$ be a program defined by
\begin{align*}
	Q(L)_m  = & ( \P(R'\cup \{g_{i,j}:\; i\neq m,j \}),\\
	 &  \V(\{r\in R':\; m\in A_r\}\cup \{g_{m,j}:\; m\neq j\})\P\{g_{i,m}:\; i\neq m\},\\
	 & \V(\{r\in R:\; m\not\in A_r\}\cup \{g_{i,j}:\; j\neq m,i\})).
\end{align*}
Choose a progression which assigns respectively 0,1,2 to operations in every process. It follows from the construction presented in the proof of Proposition \ref{p:HolePrp} that 
\[
	S(\mathcal{Q}(L))=\R^n\setminus \left( \bigcup_{i\neq j} \left(\mathbf{c}^{i,j},\mathbf{d}^{i,j}\right) \cup \bigcup_{r\in R'} \left(\mathbf{0},\mathbf{k}^{r}\right)  \right)=K_L
\]
In particular, if we take $n=6$ and 
\begin{multline*}
	\{A_r\}_{r=1}^{10}=\{
		\{1,2,3\},\{2,3,4\},\{3,4,5\},\{4,5,1\},\{5,1,2\},\\ \{1,3,6\},\{2,4,6\},\{3,5,6\},\{4,1,6\},\{5,2,6\}
	\}
\end{multline*}
we obtain a program with 6 processes and 40 resources having multiplicity 5 whose execution space is homotopy equivalent to the disjoint union of a projective plane $\mathbb{R}P^2$ and a sphere $S^4$.

\section{PV-programs with bounded capacity of resources}

The construction of a PV-program such that its execution space contains a given space $X$ as a connected component requires resources of high capacity --- one less than a number of vertices needed for presenting $X$ as a simplicial complex. A natural question arises: what execution spaces can we obtain when we put a restriction on the capacity of resources? This motivates the following definition:

\begin{df}
	We say that a topological space $X$ is \emph{a $PV(n)$-space}, where $n\in\{1,2,\dots\}\cup\{+\infty\}$ if there exists a valid PV-program $\mathcal{Q}$ with the set of resources $R$ such that:
	\begin{itemize}
	\item{the execution space of $\mathcal{Q}$ contains a connected component homotopy equivalent to $X$,}
	\item{all resources $r\in R$ have capacity at most $n$.}
	\end{itemize} 
\end{df}

The main result of this paper states that geometric realizations of finite simplicial complex are $PV(\infty)$-spaces. On the other hand, Raussen \cite{R4} proved that the  space of directed paths on a hyperrectangle with a finite number of hyperrectangular areas removed has a homotopy type of a finite prod-simplicial complex (which can be obviously triangulated). As a consequence, we obtain

\begin{prp}
	A topological space is a $PV(\infty)$-space if and only if it is homotopy equivalent to a geometric realization of a finite simplicial complex.\qed
\end{prp}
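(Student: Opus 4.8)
The plan is to establish the two implications separately, each following almost directly from results already in hand.

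For the implication that every space $X$ homotopy equivalent to the realization of a finite simplicial complex is a $PV(\infty)$-space, I would simply invoke Theorem \ref{t:MainThm}. It produces an elementary valid PV-program $\mathcal{Q}$ whose execution space has a connected component homotopy equivalent to $X$. Since the capacity bound in the definition of a $PV(\infty)$-space is $+\infty$, no condition on $\mu$ needs to be checked, so $\mathcal{Q}$ already witnesses that $X$ is a $PV(\infty)$-space.

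For the converse, suppose $X$ is a $PV(\infty)$-space, witnessed by a valid program $\mathcal{Q}=\{Q_j\}_{j=1}^n$ with initial point $\mathbf{a}$ and final point $\mathbf{b}$. Every d-path from $\mathbf{a}$ to $\mathbf{b}$ has non-decreasing coordinates and therefore stays inside the hyperrectangle $[\mathbf{a},\mathbf{b}]$, so $E(\mathcal{Q})=\vec{P}(S(\mathcal{Q})\cap[\mathbf{a},\mathbf{b}])_{\mathbf{a}}^{\mathbf{b}}$. The first step is to identify $S(\mathcal{Q})\cap[\mathbf{a},\mathbf{b}]$ as a hyperrectangle with finitely many open hyperrectangular regions removed. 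This is where the structure of the potential functions enters: each $a_r^{\mathcal{Q}}(\mathbf{t})=\sum_j a_r^{Q_j}(t_j)$ is a sum of one-variable step functions, constant on the open cells of the grid determined by the progressions and taking only finitely many values. Hence the forbidden set $\{\mathbf{x}\in[\mathbf{a},\mathbf{b}]:\; \exists_{r}\; a_r^{\mathcal{Q}}(\mathbf{x})>\mu(r)\}$ is a finite union of open boxes, and $S(\mathcal{Q})\cap[\mathbf{a},\mathbf{b}]$ is precisely a hyperrectangle with finitely many hyperrectangular holes.

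With this identification I would then apply Raussen's theorem \cite{R4}: the directed path space of such a space has the homotopy type of a finite prod-simplicial complex, which can be triangulated and so is homotopy equivalent to the realization of a finite simplicial complex. A connected component of the realization of a finite simplicial complex is itself the realization of a finite subcomplex; hence the component of $E(\mathcal{Q})$ homotopy equivalent to $X$, and therefore $X$ itself, is homotopy equivalent to the realization of a finite simplicial complex. The only genuinely non-formal step is the first one of the converse --- verifying that the removed region is a finite union of open hyperrectangles so that Raussen's hypotheses apply --- but this is immediate from the cell-wise constancy and finite range of the potential functions; everything else is bookkeeping together with the cited homotopy-equivalence results.
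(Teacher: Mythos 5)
Your proof is correct and follows exactly the route the paper takes: Theorem \ref{t:MainThm} for the ``if'' direction, and Raussen's finite prod-simplicial model from \cite{R4} for the ``only if'' direction (the paper states both ingredients in the paragraph preceding the proposition and leaves the rest as a \qed). Your extra verification that the forbidden region inside $[\mathbf{a},\mathbf{b}]$ is a finite union of open hyperrectangles is a detail the paper glosses over, and it is sound --- it is essentially the same argument as Lemma \ref{l:Cube}.
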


Some facts about $PV(n)$-spaces, for $n<\infty$, are known. For example, the Raussen's model gives also the full description of $PV(1)$-spaces --- by \cite[Prop.\ 5.2]{R4}, if $X$ is a $PV(1)$-space, then it has to be contractible. One can also observe that finite products of $PV(n)$-spaces are again $PV(n)$-spaces. Still, there are many interesting open questions:
\begin{enumerate}
	\item{Does there exists a $PV(n)$-space which is not a $PV(n-1)$-space?}
	\item{Is every $PV(2)$-space aspherical, i.e.\ has trivial higher homotopy groups?}
	\item{Does every $PV(n)$-space can be realized as an execution space of a PV-program using only multiplicity $n$ resources?}
	\item{By \cite[2.6.1]{RZ} the sphere $S^{n-1}$ is a $PV(n)$-space. Is this a $PV(k)$-space for some $k<n$?}
\end{enumerate}


\begin{thebibliography}{FGHR}
\bibitem{B}
	P. Bubenik,
	\emph{Simplicial models for concurrency},
	Electronic Notes in Theoretical Computer Science \textbf{283} (2012), 3-12.
\bibitem{D}
	E. W. Dijkstra,
	\emph{Co-operating sequential processes},
	Programming Languages (F. Genuys, ed.), Academic Press, New York, 1968, 43–110.
\bibitem{FGHR}
	L. Fajstrup, E. Goubault, E. Haucourt, and M. Raussen,
	\emph{Components of the fundamental category},
	Appl. Categ. Structures \textbf{12} (2004), 81–108.
\bibitem{Gr}
	M. Grandis,
	\emph{Directed homotopy theory, I. The fundamental category},
	Cahiers Top. Geom. Diff. Categ \textbf{44} (2003), 281-316.
\bibitem{H}
	A. Hatcher,
	\emph{Algebraic Topology},
	Cambridge University Press (2002).
\bibitem{P}
	V. Pratt,
	\emph{Modelling concurrency with geometry},
	 Proc. of the 18th ACM Symposium on Principles of Programming Languages. (1991), 311–322.
\bibitem{R1}	
	M. Raussen,
	\emph{Trace spaces in a pre-cubical complex},
	Topology Appl. 156 \textbf{9} (2009), 1718-1728.
\bibitem{R2}
	M. Raussen,
	\emph{Simplicial models for trace spaces},
	 Algebr. Geom. Topol. \textbf{10} (2010), 1683-1714.
\bibitem{R3}
	M. Raussen,
	\emph{Simplicial models for trace spaces II},
	Algebr. Geom. Topol. \textbf{12} (2012) 1741-1761.
\bibitem{R4}
	M. Raussen,
	\emph{Execution spaces for simple higher dimensional automata},
	Applicable Algebra in Engineering, Communication and Computing
	\textbf{23} (2012), 59-84.
\bibitem{RZ}
	M. Raussen, K. Ziemia\'nski,
	\emph{Homology of spaces of directed paths on Euclidean cubical complexes},
	J. Homotopy Relat. Struct. \textbf{9} (2014), 67-84. DOI 10.1007/s40062-013-0045-4.
\bibitem{Z1}
	 K. Ziemia\'nski,
	 \emph{A cubical model for path spaces in d-simplicial complexes}
	 Topology Appl. \textbf{159} (2012), 2127-2145.
	 
\end{thebibliography}
\end{document}